\newtheorem{theorem}{Theorem}[section]
\newtheorem{proposition}[theorem]{Proposition}
\newtheorem{remark}[theorem]{Remark}
\newcommand{\R}{\mathbb{R}}
\newcommand{\D}{\mathbb{D}}
\def\bbm[#1]{\mbox{\boldmath $#1$}}
\newcommand{\beq }{\begin{equation}}
\newcommand{\eeq }{\end{equation}}
\def\sideremark#1{\ifvmode\leavevmode\fi\vadjust{\vbox to0pt{\vss% the remark3
 \hbox to 0pt{\hskip\hsize\hskip1em%                          will appear only
 \vbox{\hsize3cm\tiny\raggedright\pretolerance10000%          on the side
  \noindent #1\hfill}\hss}\vbox to8pt{\vfil}\vss}}}%
\begin{document}

\title[Vectorial $p$-Laplacian system]{Regularity and symmetry results for the vectorial p-Laplacian }

\author{Luigi Montoro, Luigi Muglia, Berardino Sciunzi, Domenico Vuono}

\email[Luigi Montoro]{luigi.montoro@unical.it}%
\email[Luigi Muglia]{luigi.muglia@unical.it}
\email[Berardino Sciunzi]{sciunzi@mat.unical.it}
\email[Domenico Vuono]{domenico.vuono@unical.it}
\address[L. Montoro, L. Muglia, B. Sciunzi, D. Vuono]{Dipartimento di Matematica e Informatica, Università della Calabria,
Ponte Pietro Bucci 31B, 87036 Arcavacata di Rende, Cosenza, Italy}

\keywords{p-Laplacian system, regularity results, symmetry results}

\subjclass[2020]{35J92, 35B65,  35B51, 35B06}

\maketitle

\begin{abstract}
We obtain some regularity  results for solutions to vectorial $p$-Laplace equations  $$ -{\boldsymbol \Delta}_p{\boldsymbol u}=-\operatorname{\bf div}(|D{\boldsymbol u}|^{p-2}D{\boldsymbol u}) = {\boldsymbol f}(x,{\boldsymbol u})\,\, \mbox{ in $\Omega$}\,.$$
More precisely we address the issue of second order estimates for the stress field. As a consequence of our regularity results we deduce a weighted Sobolev inequality that leads to  weak comparison principles. As a corollary we run over the moving plane technique to deduce symmetry and monotonicity results for the solutions, under suitable assumptions. 
\end{abstract}
\section{Introduction}

Let $\Omega$ be a bounded smooth domain in $\mathbb{R}^n$ with $n\geq 2$. In this paper  we will consider problems involving the vectorial operator $-{\boldsymbol \Delta}_p{\boldsymbol u}$ defined, for  smooth functions, by 
\beq\label{pLaplace}
-{\boldsymbol \Delta}_p{\boldsymbol u}=-\operatorname{\bf div}(|D{\boldsymbol u}|^{p-2}D{\boldsymbol u})
\eeq
where $p>1$ and $D{\boldsymbol u}$ is the Jacobian of  the  vector field ${\boldsymbol u}\,:\, \Omega \rightarrow \mathbb{R}^N$, $N\geq 2$. We shall use the notation  ${\boldsymbol u}=(u^1,\ldots,u^N)$, for a weak $C^{1}(\overline{\Omega})$ solution to the $p$-Laplace system 
\beq\label{system1}\tag{$p$-$S$}
\begin{cases}
-\operatorname{\bf div}(|D{\boldsymbol u}|^{p-2}D{\boldsymbol u})
=   {\boldsymbol f}(x,\boldsymbol u)& \mbox{in $\Omega$}\\
{\boldsymbol u}= 0  & \mbox{on  $\partial \Omega$}, 
\end{cases}
\eeq
where  $ {\boldsymbol f}:\overline\Omega \times \mathbb{R}^N\to\mathbb{R}^N$ satisfies a suitable set of assumptions \eqref{hpf} stated below.\\

\noindent The first step of our study is about  the regularity of the solutions. This issue is still undertaken in the literature although it represents an important milestone.
 It is well known  that the solutions to \eqref{system1} lie in $C^{1,\alpha}(\overline{\Omega})\cap C^2(\overline \Omega\setminus Z_{\boldsymbol{u}})$, where $Z_{\boldsymbol{u}}$ is the set where the  gradient vanishes, for every $p>1$ under suitable assumptions on the source term.  We refer the reader to  \cite{ChenDiB, KuuMin}  and to the  recent deep developments in  \cite{DM,DM2,DM3}.
  Second-order estimates were established in \cite{BaCiDiMa,Cma} for solutions to the $p$-Laplace system with right-hand side in $L^2$. In particular, in \cite{BaCiDiMa} the authors proved that $|D\boldsymbol{u}|^{p-2}D\boldsymbol{u}\in W^{1,2}_{loc}(\Omega)$, for $p>2(2-\sqrt{2})$.  \\In \cite{M}, the author proves that for $p\geq 3$ and $f\in W^{1,p'}(\Omega)$ then $D u$ locally lies in a suitable fractional Nikol'skii space. This result is obtained proving that  $|D\boldsymbol{u}|^{s-1}D\boldsymbol{u}\in W^{1,2}_{loc}(\Omega)$ for every $s\in ((p-1)/2,p/2]$ when $p\geq 3$.  As a rule,  if $p\in[2,3)$,  $|D\boldsymbol{u}|^{s-1}D\boldsymbol{u}\in W^{1,2}_{loc}(\Omega)$ for $s\in (1,p/2]$ follows too.\\
We extend the regularity estimates for the second derivatives of solutions to \eqref{system1} and,  as a consequence,  we also deduce integrability proprieties of $|D\boldsymbol{u}|^{-1}$. 

\noindent Our first result is the following:
\begin{theorem}\label{teosecond}
	Let $\Omega$ be a bounded  smooth domain and $p>1$. Let $\boldsymbol{u}\in C^1({\Omega})$ be a weak solution of \eqref{system1}, with $$\boldsymbol f(x,\boldsymbol u):=\boldsymbol f(x)\in W_{loc}^{1,\frac{n}{n-\gamma-s}}(\Omega)\cap C_{loc}^{0,\beta}(\Omega),$$
 where $0<s<n-\gamma$ and $\gamma<n-2$ ($\gamma=0$ if $n=2$). 
 
If $1<p\leq 2$ let us assume $0\leq \alpha <p-1$ and if $p>2$ let  $\alpha\in [0,1)$. Then,  for any $\tilde{\Omega} \subset\subset \Omega$,  it follows that 
	\begin{equation}\label{der2}
		\int_{\tilde{\Omega}\setminus Z_{\boldsymbol u}}\frac{|D\boldsymbol u|^{p-2-\alpha}\|D^2\boldsymbol{u}\|^2}{|x-y|^{\gamma}}dx\leq C,
	\end{equation}
	uniformly for any $y\in \tilde{\Omega}$, with  $C=C(\boldsymbol f, n,p,\alpha,\gamma,\tilde \Omega)$.
\end{theorem}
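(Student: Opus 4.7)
My approach extends to the vectorial setting the weighted test-function strategy developed for the scalar $p$-Laplacian: differentiate the equation, test against a function that couples a cutoff, the singular pole $|x-y|^{-\gamma}$, and a power of $|D\boldsymbol{u}|$, then absorb the remainder using the ellipticity of the linearised operator.

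\textbf{Regularization and testing.} I first approximate \eqref{system1} by the non-degenerate family
\[
-\operatorname{\bf div}\!\left((|D\boldsymbol{u}_\eps|^{2}+\eps^{2})^{(p-2)/2}D\boldsymbol{u}_\eps\right)=\boldsymbol{f}_\eps\quad\text{in }\Omega,
\]
with $\boldsymbol{f}_\eps$ a suitable mollification, so that $\boldsymbol{u}_\eps\in C^{2}_{\mathrm{loc}}$ and $\boldsymbol{u}_\eps\to\boldsymbol{u}$ in $C^{1,\alpha'}_{\mathrm{loc}}$. I also regularize the pole by $(|x-y|^{2}+\dt^{2})^{-\gamma/2}$. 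Differentiating the $i$-th component of the regularized equation in $x_{k}$ and testing against
\[
\varphi^{i}_{k}=\partial_{k}u_{\eps}^{i}\,(|D\boldsymbol{u}_\eps|^{2}+\eps^{2})^{-\alpha/2}\,\psi^{2}(x)\,(|x-y|^{2}+\dt^{2})^{-\gamma/2},
\]
with $\psi\in C^{\infty}_{c}(\Omega)$ a cutoff equal to $1$ on $\tilde{\Omega}$, and summing over $i,k$, yields on the left-hand side the principal term
\[
\int (|D\boldsymbol{u}_\eps|^{2}+\eps^{2})^{(p-2-\alpha)/2}\|D^{2}\boldsymbol{u}_\eps\|^{2}\psi^{2}(|x-y|^{2}+\dt^{2})^{-\gamma/2}\,dx
\]
plus three perturbations.

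\textbf{Absorption.} The first perturbation, from differentiating $(|D\boldsymbol{u}_\eps|^{2}+\eps^{2})^{-\alpha/2}$, is of the same structural form and is absorbed into the principal term by exploiting the two eigenvalues $(|D\boldsymbol{u}_\eps|^{2}+\eps^{2})^{(p-2)/2}$ and $(p-1)(|D\boldsymbol{u}_\eps|^{2}+\eps^{2})^{(p-2)/2}$ of the Jacobian of $\xi\mapsto|\xi|^{p-2}\xi$; the positivity of the residual coefficient requires precisely $\alpha<p-1$ when $1<p\le 2$ and $\alpha<1$ when $p>2$. The second, from $\nabla\psi$, is absorbed using the uniform $C^{1,\alpha'}$ bound on $\boldsymbol{u}_\eps$ together with the local integrability of $|x-y|^{-\gamma}$ guaranteed by $\gamma<n-2$. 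The third, from $\nabla_{x}(|x-y|^{2}+\dt^{2})^{-\gamma/2}$, carries a factor $|x-y|^{-\gamma-1}$ and is treated by Young's inequality, yielding an $L^{1}$-bounded remainder under the same dimensional restriction. Finally, the right-hand side $\sum_{i,k}\int\partial_{k}f_{\eps}^{i}\,\varphi_{k}^{i}\,dx$ is estimated by Hölder pairing $\partial_{k}\boldsymbol{f}\in L^{n/(n-\gamma-s)}$ against $|x-y|^{-\gamma}$, which is integrable to the dual exponent $n/(\gamma+s)$ exactly because $s>0$; the $C^{1,\alpha'}$ bound controls the remaining $L^{\infty}$ factor.

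\textbf{Limit and main obstacle.} Sending $\dt\to 0$ by Fatou's lemma and then $\eps\to 0$ via $C^{1,\alpha'}$ convergence and dominated convergence on $\{|D\boldsymbol{u}|>\sigma\}$, $\sigma\downarrow 0$, yields \eqref{der2}; uniformity in $y\in\tilde{\Omega}$ is manifest because every estimate depends only on $\boldsymbol{f}$ and $\boldsymbol{u}$ over a fixed neighbourhood of $\tilde{\Omega}$. The \emph{main obstacle} is the simultaneous management of two absorption constraints: one dictated by the spectral structure of the vectorial stress tensor (which determines the admissible range of $\alpha$ in each regime of $p$), and one dictated by the derivative of the singular weight (which forces $\gamma<n-2$ together with the Hölder exponent on $\boldsymbol{f}$). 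Keeping this bookkeeping uniform in $(\eps,\dt)$, rather than merely at the limit, and preserving the correct sign of the principal coefficient throughout, is the delicate point of the argument.
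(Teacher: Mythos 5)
Your proposal follows essentially the same strategy as the paper: approximate \eqref{system1} by the non-degenerate regularized system, differentiate once, test the differentiated equation against $\partial_k u^i_\varepsilon$ times a cutoff, a power of the regularized gradient modulus, and a regularized singular weight, absorb the lower-order terms into the principal one using the ellipticity of the linearization (which is exactly what fixes the admissible ranges $\alpha<p-1$ for $1<p\le 2$ and $\alpha<1$ for $p>2$), and pass to the limit via Fatou. The only differences are technical: you regularize the pole by $(|x-y|^2+\delta^2)^{-\gamma/2}$ whereas the paper uses the truncation $H_{1/m}(|x-y|)=G_{1/m}(|x-y|)/|x-y|^{\gamma+1}$; you additionally mollify $\boldsymbol f$ (not needed under the stated hypotheses, since $\boldsymbol f\in W^{1,n/(n-\gamma-s)}_{loc}\cap C^{0,\beta}_{loc}$ already suffices); and to remove the approximation in the limit you invoke dominated convergence on $\{|D\boldsymbol u|>\sigma\}$ while the paper passes through Schauder estimates to get $C^2$ convergence on compacta of $B\setminus Z_{\boldsymbol u}$ and then applies Fatou. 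These are interchangeable implementations of the same argument, and your bookkeeping of the constraints ($\gamma+2<n$ from the derivative of the pole, $s>0$ for the H\"older pairing with $D\boldsymbol f$) matches the paper's.
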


 We also point out  that Theorem \ref{teosecond} holds without any sign assumption on the source term $\boldsymbol{f}$. If else we assume that $\boldsymbol{f}$ has a sign, as a consequence of the previous result and we obtain the integrability properties of the inverse of the weight $|D \boldsymbol u|^{p-2}$.
\begin{theorem}\label{Reg2}
	Let $\Omega\subset \mathbb{R}^n$ be a  bounded smooth domain and let $\boldsymbol{u} \in C^{1}({\Omega})$ be a  weak solution of \eqref{system1} with $\boldsymbol f(x,\boldsymbol u):=\boldsymbol f(x)\in W_{loc}^{1,\frac{n}{n-\gamma-s}}(\Omega)\cap C_{loc}^{0,\beta}(\Omega)$, where $0<s<n-\gamma$ and $\gamma <n-2$ if $n\geq 3$ ($\gamma=0$ if $n=2$). Suppose that for some $i$:
	\[
	f^i\geq \tau >0 \quad in \quad  \Omega\,.
	\]
	Then, for any $\tilde{\Omega} \subset\subset \Omega$
	\begin{equation}\label{zicurli}
	    \int_{\tilde \Omega}\frac{1}{|D \boldsymbol u|^{\sigma}}\frac{1}{|x-y|^\gamma}\leq C,
	\end{equation} for any 
	\begin{equation}\label{eq:varsig}
		\sigma < \begin{cases}2p-3 &\text{if } 1<p\leq 2\\
		p-1 &\text{if } p>2,
		\end{cases}
	\end{equation} 
and $C=C(\boldsymbol f,n, p, \gamma, \sigma,\tau, \tilde \Omega)$ is a positive constant. In particular the Lesbegue measure of $Z_{\boldsymbol{u}}$ is zero.
\end{theorem}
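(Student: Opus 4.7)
The plan is to pick one component $i$ for which $f^i\geq\tau>0$ and exploit the scalar equation
$-\operatorname{div}(|D\boldsymbol u|^{p-2}\nabla u^{i})=f^{i}$
satisfied by $u^{i}$. Testing this scalar equation against a nonnegative function $\psi$ yields $\tau\int\psi\leq\int|D\boldsymbol u|^{p-2}\nabla u^{i}\cdot\nabla\psi$, so the game is to pick $\psi$ reproducing the weighted integrand of \eqref{zicurli} on the left. The natural choice is
$$
\psi\;=\;\frac{\varphi^{2}}{(|D\boldsymbol u|^{2}+\varepsilon)^{\sigma/2}\bigl(|x-y|^{2}+\eta^{2}\bigr)^{\gamma/2}},
$$
where $\varphi\in C_{c}^{\infty}(\Omega)$ is a cutoff with $\varphi\equiv 1$ on $\tilde\Omega$, $y\in\tilde\Omega$ is fixed, and $\varepsilon,\eta>0$ are regularisation parameters needed because $\boldsymbol u$ is only $C^{2}$ off of $Z_{\boldsymbol u}$ and because $|x-y|^{-\gamma}$ must be smoothed at $y$ to ensure $\psi$ is Lipschitz.

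The dominant contribution to $\int|D\boldsymbol u|^{p-2}\nabla u^{i}\cdot\nabla\psi$ comes from differentiating the gradient weight $(|D\boldsymbol u|^{2}+\varepsilon)^{-\sigma/2}$, and is controlled in absolute value by
$$
C\int\frac{\varphi^{2}\,|D\boldsymbol u|^{p-\sigma-2}\,\|D^{2}\boldsymbol u\|}{(|x-y|^{2}+\eta^{2})^{\gamma/2}}.
$$
I would then apply Cauchy--Schwarz with the split
$|D\boldsymbol u|^{p-\sigma-2}\|D^{2}\boldsymbol u\|=\bigl(|D\boldsymbol u|^{(p-2-\alpha)/2}\|D^{2}\boldsymbol u\|\bigr)\cdot|D\boldsymbol u|^{(p-2\sigma-2+\alpha)/2}$
and choose $\alpha:=\sigma-(p-2)$: the first factor is precisely the integrand of \eqref{der2}, while the second becomes $|D\boldsymbol u|^{-\sigma/2}$, reproducing the LHS under the square root. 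A direct check shows that the value $\alpha=\sigma-(p-2)$ belongs to $[0,p-1)$ if $1<p\leq 2$ and to $[0,1)$ if $p>2$ \emph{exactly} when $\sigma$ satisfies \eqref{eq:varsig}; this is the origin of the thresholds in the statement. Hence the first factor is bounded uniformly in $y$ by Theorem \ref{teosecond}, and Young's inequality absorbs the second factor into the LHS.

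The two remaining terms are harmless. The cutoff term is supported where $|\nabla\varphi|\neq 0$ and is dominated by $C\int|x-y|^{-\gamma}$, finite since $\gamma<n$; the distance-weight term is bounded by $C\int|D\boldsymbol u|^{p-1-\sigma}|x-y|^{-\gamma-1}$, finite uniformly in $y$ because $p-1-\sigma>0$ in the admissible range (so $|D\boldsymbol u|^{p-1-\sigma}$ is bounded thanks to $\boldsymbol u\in C^{1}(\overline\Omega)$) and because $\gamma+1<n$, thanks to $\gamma<n-2$. Passing $\varepsilon,\eta\downarrow 0$ by monotone convergence produces \eqref{zicurli} with constant independent of $y\in\tilde\Omega$; finiteness of the resulting integral immediately forces $|D\boldsymbol u|^{-\sigma}<\infty$ a.e., hence $|Z_{\boldsymbol u}|=0$. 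The principal technical hurdle is exactly the exponent matching in the Cauchy--Schwarz step: the constraint \eqref{eq:varsig} is nothing but the translation into a condition on $\sigma$ of the requirement that $\alpha=\sigma-(p-2)$ lies in the admissibility range of Theorem \ref{teosecond}.
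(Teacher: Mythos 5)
The core strategy you outline --- test the $i$-th component equation against $\varphi^{2}\,|D\boldsymbol u|^{-\sigma}\,|x-y|^{-\gamma}$ (suitably regularised), exploit $f^{i}\geq\tau>0$ on the left, and control the principal error by a weighted Young/Cauchy--Schwarz step whose exponents are matched to the $\alpha$-range of Theorem~\ref{teosecond} --- is exactly the paper's strategy, and your derivation of the threshold~\eqref{eq:varsig} from the constraint $\alpha\geq\sigma-(p-2)$ is the right one. There is, however, a genuine gap in the admissibility of your test function. Your $\psi=\varphi^{2}(|D\boldsymbol u|^{2}+\varepsilon)^{-\sigma/2}(|x-y|^{2}+\eta^{2})^{-\gamma/2}$ is \emph{not} Lipschitz for $\varepsilon>0$, as you assert: adding $\varepsilon$ under the power only keeps the quantity away from zero, it does nothing to restore differentiability of $|D\boldsymbol u|$, which for a $p$-Laplace solution is merely H\"older continuous across $Z_{\boldsymbol u}$. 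Computing $\nabla\psi$ therefore needs weak second derivatives of $\boldsymbol u$ with suitable integrability, and the only a~priori information of that kind is the weighted bound of Theorem~\ref{teosecond} --- which is precisely what you intend to invoke later, so the argument as written is circular. The paper avoids this by running the entire estimate at the level of the $C^{2,\alpha}$ solutions $\boldsymbol u_{\varepsilon}$ of the regularised system~\eqref{system}: one tests \eqref{system} with $\psi^{i}=\varphi^{2}\,(\tfrac1m+|D\boldsymbol u_{\varepsilon}|^{2})^{-\sigma/2}\,H_{1/m}(|x-y|)$, obtains bounds uniform in $\varepsilon$ and $m$ by invoking the $\varepsilon$-level inequality~\eqref{eq:princ1i} from the proof of Theorem~\ref{teosecond}, and only then passes to the limit by Fatou. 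That approximation layer is an essential ingredient and must appear explicitly.

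A smaller point: your claim that $\alpha:=\sigma-(p-2)$ lies in $[0,1)$ for $p>2$ \emph{exactly when} $\sigma<p-1$ is not correct, since $\alpha<0$ whenever $\sigma<p-2$. The fix is immediate --- take $\alpha=\max\{0,\,\sigma-(p-2)\}$; then the leftover factor $(\varepsilon+|D\boldsymbol u_{\varepsilon}|^{2})^{(\alpha-\sigma+p-2)/2}$ has nonnegative exponent and is bounded by a power of $\|D\boldsymbol u\|_{L^{\infty}}$, and the argument closes as before --- but as stated the ``exactly when'' is an overstatement.
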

\begin{remark}\label{eq:avetecapitochecisnio}
The reader will observe in the proof of Theorems \ref{teosecond}-\ref{Reg2} that the case $\gamma=0$ follows assuming $\boldsymbol  f(x)\in W_{loc}^{1,1}(\Omega)\cap C_{loc}^{0,\beta}(\Omega)$.
\end{remark}
\begin{remark}
    If $\boldsymbol{u}\in C^1(\bar\Omega)$ and $Z_{\boldsymbol{u}}\subset\subset\Omega$, it follows that the estimate \eqref{zicurli} holds up to the boundary, i.e.
    \begin{equation}\label{zicurli2}
	    \int_{ \Omega}\frac{1}{|D \boldsymbol u|^{\sigma}}\frac{1}{|x-y|^\gamma}\leq C.
	\end{equation}
\end{remark}
\noindent Note that Theorem \ref{Reg2} will be crucial in our application since it will allow us to deduce a weak comparison principle in small domains. 
\\

\noindent Let us now discuss our regularity result. The natural Calderon-Zygmund theory developed in \cite{BaCiDiMa} states that $|D\boldsymbol{u}|^{(p-2)}\|D^2\boldsymbol{u}\|\in L^2_{loc}(\Omega)$ if $p>2(2-\sqrt{2})$ and $\boldsymbol f\in L^2_{loc}(\Omega)$. A global result is also proved in \cite{BaCiDiMa}.
This implies that if $\boldsymbol f\in L^2(\Omega)$, $|D\boldsymbol{u}|^{(p-2)}D\boldsymbol{u}\in W^{1,2}(\Omega)$ when $p>2(2-\sqrt{2})$. 
Hidden in this result and under suitable assumptions,  $\boldsymbol{u}\in W^{2,2}_{loc}(\Omega)$ if $2(2-\sqrt 2)<p\leq 2$.\\ 

\noindent 
As an important consequence, exploiting our Theorem \ref{teosecond} and Theorem \ref{Reg2}, we recover that  $\boldsymbol{u}\in W^{2,2}_{loc}(\Omega)$ if $p\in(1,3)$ and, under the assumption that $\boldsymbol{f}$ has a sign, we prove that for $p\geq 3$,  $\boldsymbol u\in W^{2,q}_{loc}(\Omega)$ with $1\leq q<\frac{p-1}{p-2}$, see   Theorem \ref{Reg} in Section~\ref{seconda}.\\

\noindent The second part of the paper is devoted to obtain monotonicity and symmetry results, exploiting the moving plane procedure.  This is strongly related to weak and strong comparison principles that are known to fail, in general, in the vectorial case.  Therefore we first derive a comparison principle for our system, under very general assumptions on $\boldsymbol{f}$. We start the following

\noindent 
\begin{theorem}[Weak Comparison Principle in small domains]\label{thm:weak1}
Let $p>1$ and  $\boldsymbol u, \boldsymbol v\in C^1(\overline\Omega)$   such that either $\boldsymbol u$ or $\boldsymbol v$ is solution to \eqref{system1} and
\begin{equation}\label{eq:(20)0}
-\boldsymbol \Delta_p \boldsymbol u -\boldsymbol f(x,\boldsymbol u) \leq -\boldsymbol \Delta_p\boldsymbol v- \boldsymbol f(x, \boldsymbol v) \quad \mbox{in $\Omega$},\end{equation} where $\boldsymbol f$ satisfies \eqref{hpf}.
Let $\tilde \Omega \subset \subset \Omega$ be open and assume that
\begin{equation}\label{ipotesibrut}
    (u^i-v^i)(u^j-v^j)\geq 0 \quad \text{in } \tilde \Omega, \quad \text{for } i\neq j,
\end{equation}
and $\boldsymbol u\leq \boldsymbol v$ on $\partial \tilde\Omega$. \\
Then there exist $\lambda=\lambda(\boldsymbol f,p,N, \|D {\boldsymbol u}\|_{\infty})>0$ such that, if $|\tilde \Omega |<\lambda$, it follows that 
\[\boldsymbol u\leq \boldsymbol v \quad \text{in }  \tilde\Omega.\]
If $\tilde \Omega  \subseteq \Omega$, the same result holds assuming, for  $p>2$, that $Z_{\boldsymbol{u}}\subset\subset \Omega$.
\end{theorem}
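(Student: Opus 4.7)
The plan is to adapt the Damascelli small-domain comparison argument to the vectorial setting, using as test function the componentwise positive part of $\boldsymbol u-\boldsymbol v$. Setting $w^i:=u^i-v^i$ and $\boldsymbol\phi:=((w^1)_+,\ldots,(w^N)_+)$, the boundary condition on $\partial\tilde\Omega$ ensures $\boldsymbol\phi\in W_0^{1,\infty}(\tilde\Omega;\mathbb{R}^N)$, so plugging $\boldsymbol\phi$ into the weak form of \eqref{eq:(20)0} yields
\[
\int_{\tilde\Omega}\bigl(|D\boldsymbol u|^{p-2}D\boldsymbol u-|D\boldsymbol v|^{p-2}D\boldsymbol v\bigr):D\boldsymbol\phi\,dx\leq \int_{\tilde\Omega}\bigl(\boldsymbol f(x,\boldsymbol u)-\boldsymbol f(x,\boldsymbol v)\bigr)\cdot\boldsymbol\phi\,dx.
\]
The crux is to exploit the cross-component hypothesis \eqref{ipotesibrut} to identify $D\boldsymbol\phi$ with $D\boldsymbol w$ on the support $F:=\{\boldsymbol\phi\neq 0\}=\bigcup_i\{w^i>0\}$. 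Indeed, \eqref{ipotesibrut} forces $w^j\geq 0$ on $F$ for every $j$, and at a point $x_0\in F$ where $w^j(x_0)=0$ the function $w^j$ attains a minimum on the open set $\{w^i>0\}\subset F$, so $Dw^j(x_0)=0$ by $C^1$ regularity. Thus $D\boldsymbol\phi=D\boldsymbol w$ a.e.\ on $F$, and the vectorial monotonicity inequality $(|A|^{p-2}A-|B|^{p-2}B):(A-B)\geq C_p(|A|+|B|)^{p-2}|A-B|^2$ applied with $A=D\boldsymbol u$, $B=D\boldsymbol v$ bounds the left-hand side from below by $C_p\int_{\tilde\Omega}(|D\boldsymbol u|+|D\boldsymbol v|)^{p-2}|D\boldsymbol\phi|^2\,dx$. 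The same identity $\boldsymbol w=\boldsymbol\phi$ on $F$, combined with the Lipschitz assumption on $\boldsymbol f$ included in \eqref{hpf}, yields $\int(\boldsymbol f(x,\boldsymbol u)-\boldsymbol f(x,\boldsymbol v))\cdot\boldsymbol\phi\,dx\leq L_{\boldsymbol f}\int|\boldsymbol\phi|^2\,dx$.

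Combining these two bounds produces the core estimate
\[
C_p\int_{\tilde\Omega}(|D\boldsymbol u|+|D\boldsymbol v|)^{p-2}|D\boldsymbol\phi|^2\,dx\leq L_{\boldsymbol f}\int_{\tilde\Omega}|\boldsymbol\phi|^2\,dx.
\]
For $1<p\leq 2$ the weight is bounded below by a positive constant $M=M(p,\|D\boldsymbol u\|_\infty,\|D\boldsymbol v\|_\infty)>0$, since $t\mapsto t^{p-2}$ is decreasing; standard Poincaré on $\tilde\Omega$ then forces $\boldsymbol\phi\equiv 0$ as soon as $|\tilde\Omega|$ is smaller than an explicit threshold $\lambda=\lambda(\boldsymbol f,p,N,\|D\boldsymbol u\|_\infty)$. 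For $p>2$ the weight degenerates on $Z_{\boldsymbol u}\cap Z_{\boldsymbol v}$, which is precisely where Theorem \ref{Reg2} enters the picture: the integrability $|D\boldsymbol u|^{-(p-2)}\in L^1$, valid locally in the interior case and up to $\partial\Omega$ under the extra assumption $Z_{\boldsymbol u}\subset\subset\Omega$, allows one to apply Cauchy--Schwarz in the form
\[
\int|D\boldsymbol\phi|^2\leq\Bigl(\int(|D\boldsymbol u|+|D\boldsymbol v|)^{p-2}|D\boldsymbol\phi|^2\Bigr)^{1/2}\Bigl(\int(|D\boldsymbol u|+|D\boldsymbol v|)^{2-p}|D\boldsymbol\phi|^2\Bigr)^{1/2}
\]
and to absorb the degeneracy, via the $L^\infty$ bound on $D\boldsymbol\phi$, into a weighted Poincaré inequality $\|\boldsymbol\phi\|_2^2\leq C(|\tilde\Omega|)\int(|D\boldsymbol u|+|D\boldsymbol v|)^{p-2}|D\boldsymbol\phi|^2\,dx$ with $C(|\tilde\Omega|)\to 0$ as $|\tilde\Omega|\to 0$, concluding as before.

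The main obstacle is the identification $D\boldsymbol\phi=D\boldsymbol w$ on $F$: without \eqref{ipotesibrut} the vectorial monotonicity would leave uncontrolled off-diagonal cross terms and the scalar argument would not extend. The second subtlety is the degenerate-weight regime $p>2$, where closing the estimate against a standard Poincaré really requires the integrability of $|D\boldsymbol u|^{-(p-2)}$ supplied by Theorem \ref{Reg2}, which explains both the dependence of $\lambda$ on $\|D\boldsymbol u\|_\infty$ and the need to require $Z_{\boldsymbol u}\subset\subset\Omega$ in the global version of the statement.
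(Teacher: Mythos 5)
Your proof is correct and follows essentially the same route as the paper: the test function is the componentwise positive part $(\boldsymbol{u-v})^+$, the left side is bounded below by the monotonicity inequality \eqref{eq:lucio}, the right side by the Lipschitz property of $\boldsymbol f$, and the conclusion is forced by a (classical or weighted) Poincar\'e inequality on the small domain, with the weighted version in the degenerate case $p>2$ supplied through Theorem~\ref{Reg2} (and, for $\tilde\Omega\subseteq\Omega$, through $Z_{\boldsymbol u}\subset\subset\Omega$). Two points are worth noting. First, your treatment of the source term is actually tighter than the paper's: you exploit \eqref{ipotesibrut} to conclude $\boldsymbol\phi=\boldsymbol{u-v}$ and $D\boldsymbol\phi=D(\boldsymbol{u-v})$ on the support $F$, after which the Lipschitz estimate $|f^\ell(x,\boldsymbol u)-f^\ell(x,\boldsymbol v)|\leq L\sum_k(u^k-v^k)^+$ closes the bound directly; the paper instead runs a telescoping decomposition over the components and invokes the cooperative hypothesis $({h_f})$-$(iii)$, which, as your argument implicitly reveals, is not strictly necessary once \eqref{ipotesibrut} is in force. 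Second, the Cauchy--Schwarz step you insert in the case $p>2$ is a detour that, by itself, only bounds the weighted energy rather than forcing it to vanish; what actually closes the argument is exactly what you then invoke, the weighted Poincar\'e inequality \eqref{Poincarepesata} of Theorem~\ref{PoincareEsp} with constant $C_S(|\tilde\Omega|)\to 0$, applied with weight $|D\boldsymbol u|^{p-2}\leq(|D\boldsymbol u|+|D\boldsymbol v|)^{p-2}$. The paper goes to that inequality directly, and you could drop the Cauchy--Schwarz line entirely without loss.
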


We recall that the scalar case $N=1$ has been extensively treated in \cite{DamPa} for $1<p<2$, and subsequently extended in \cite{DamSci}, for every $p>1$.
For the vectorial case $N>1$ no general symmetry results are available in the literature and some achievement 
was obtained in \cite{MRS}, under stronger assumptions on the vectorial function $\boldsymbol{f}$ (namely assuming that  $\boldsymbol{f}$ is strictly increasing respect to $x$) and in the singular case $1<p<2$. We emphasize that the strict monotonicity assumption on the nonlinearity  in  \cite{MRS} prevents to consider the natural case of autonomous equations. \\

\noindent We conclude the paper deducing a symmetry and monotonicity result. To get this result we need further structural assumptions on the nonlinear term $\boldsymbol  f$.  For the convenience of the reader,  we denote by $({h_f})$ the following
\begin{enumerate}[label=$({h_f})$, ref=${h_f}$]
\item \label{hpf}
\begin{itemize}
\item[$(i)$] ${\boldsymbol f}(x, t_1,\ldots,t_N)$ is locally Lipschitz continuous and ${\boldsymbol f}(x,\cdot)$ is a locally Lipschitz continuous vector field, uniformly with respect to $x$, that is, each component $\ell\in\{1,\ldots,N\}$ \[f^\ell(x,t_1,\ldots, t_N):  \overline\Omega\times \mathbb{R}^N\rightarrow \mathbb R\] is a Lipschitz function with respect variable to $t_j$ namely, for every  $\Omega'\subseteq \overline\Omega$ and for every $M>0$,  there is a  positive constant $L_{\ell}=L_{\ell}(M,\Omega')$ such that for every $x \in \Omega'$ and every $ t_j,s_j \in [0,M]$ it holds:
$$  \vert f^\ell(x,t_1,\ldots,t_j,\ldots, t_N)- f^\ell(x,t_1,\ldots,s_j,\ldots, t_N)\vert \leq L_\ell \vert t_j-s_j \vert. $$

\item [$(ii)$] ${\boldsymbol f}$ is a nonnegative vector field and there exists $\ell\in \{1,...,N\}$ such that
 \[f^\ell(x,t_1,\ldots,t_N)>0\] 
for all $x\in \Omega$ and for every $t_j>0$. 
\item[$(iii)$] ${\boldsymbol f} (x,\cdot)$ satisfies   
$$\frac{\partial f^\ell}{\partial t_k}(x,t_1,\ldots,t_j,\ldots, t_N)\geq 0, \quad \text{a.e.  and for }  k\neq l.$$\end{itemize}
\end{enumerate}
We succeed in adapting the moving plane procedure in the context of vectorial $p$-Laplace equations proving the following:
\begin{theorem}\label{teoremaprincipale}
Let $\Omega$ be a bounded smooth domain of $\mathbb R^n$, 
convex with respect to the $x_1$-direction and symmetric with respect to $T_0$, where
\begin{equation*}%\label{eq:T0}
 T_0=\{x\in \Omega : x_1=0\}.
 \end{equation*}
Let ${\boldsymbol u}$  be a positive $C^1(\overline\Omega)$ weak solution to \eqref{system1} such that
\begin{equation}\label{eq:sugnufiutu}
|D\boldsymbol{u}|^{p-2}D\boldsymbol{u}\in W_{loc}^{1,2}(\Omega),
\end{equation}
 with ${\boldsymbol f}$ satisfying  \eqref{hpf} and
\begin{equation}\label{eq:fcresc}
\boldsymbol f(x_1,x_2,\ldots,x_n,\boldsymbol u)< \boldsymbol f(y_1,x_2,\ldots,x_n,\boldsymbol u) \quad \mbox{ where } x_1<y_1<0, 
\end{equation}
\begin{equation*}%\label{eq:fsym}
\boldsymbol f(-x_1,\ldots,x_{n-1},x_n,\boldsymbol u)=\boldsymbol f(x_1,\ldots,x_{n-1},x_n,\boldsymbol u).
\end{equation*}
We set $\displaystyle a:=\inf_{x\in \Omega}x_1$ and $\displaystyle b:=\sup_{x\in \Omega}x_1.$ Assume that 
\begin{equation}\label{ipotesisuisupporti}
    (u^i(x)-u^i(2\lambda-x_1,...,x_n)) (u^j(x)-u^j(2\lambda-x_1,...,x_n))\geq 0 \quad i,j=1,...,N,
\end{equation}
for any $a<\lambda<b.$\\
\noindent Then $\boldsymbol u$ is symmetric with respect to~$T_0$ and non-decreasing with respect to the $x_1$-direction in~$\Omega_0=\{x\in \Omega \, :\, x_1<0\}$.
\end{theorem}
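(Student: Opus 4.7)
The plan is to implement the moving plane method in the $x_1$-direction. For $a<\lambda\le 0$, set $T_\lambda:=\{x_1=\lambda\}$, $\Omega_\lambda:=\{x\in\Omega:x_1<\lambda\}$, $x_\lambda:=(2\lambda-x_1,x_2,\ldots,x_n)$, and $\boldsymbol u_\lambda(x):=\boldsymbol u(x_\lambda)$. By the convexity of $\Omega$ in the $x_1$-direction, $x_\lambda\in\Omega$ for every $x\in\Omega_\lambda$, so $\boldsymbol u_\lambda$ is well defined on $\Omega_\lambda$ and solves $-\boldsymbol\Delta_p\boldsymbol u_\lambda=\boldsymbol f(x_\lambda,\boldsymbol u_\lambda)$ there. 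The goal is to prove
\[
\boldsymbol u\le \boldsymbol u_\lambda \quad\text{in }\Omega_\lambda,\qquad\text{for every }\lambda\in(a,0].
\]
Once this is done, running the symmetric sweep from $b$ (allowed by the symmetry of $\Omega$ and the even $x_1$-symmetry of $\boldsymbol f$) produces the reverse inequality, whence both the symmetry with respect to $T_0$ and the $x_1$-monotonicity in $\Omega_0$.

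To \emph{start} the procedure, on $\partial\Omega_\lambda\cap\partial\Omega$ we have $\boldsymbol u=0\le\boldsymbol u_\lambda$ by positivity of $\boldsymbol u$, while on $T_\lambda$ the equality $\boldsymbol u=\boldsymbol u_\lambda$ is automatic. Moreover, for $x\in\Omega_\lambda$ with $\lambda\le 0$, either a direct application of \eqref{eq:fcresc} (when $(x_\lambda)_1\le 0$) or \eqref{eq:fcresc} combined with the even $x_1$-symmetry of $\boldsymbol f$ (when $(x_\lambda)_1>0$) yields $\boldsymbol f(x,\boldsymbol u_\lambda)\le\boldsymbol f(x_\lambda,\boldsymbol u_\lambda)$, hence
\[
-\boldsymbol\Delta_p\boldsymbol u-\boldsymbol f(x,\boldsymbol u)=0\le \boldsymbol f(x_\lambda,\boldsymbol u_\lambda)-\boldsymbol f(x,\boldsymbol u_\lambda)=-\boldsymbol\Delta_p\boldsymbol u_\lambda-\boldsymbol f(x,\boldsymbol u_\lambda).
\]
Assumption \eqref{ipotesisuisupporti} is exactly the componentwise compatibility \eqref{ipotesibrut} with $\boldsymbol v=\boldsymbol u_\lambda$, so Theorem~\ref{thm:weak1} applies on $\Omega_\lambda$ as soon as $|\Omega_\lambda|$ is below its threshold. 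Since $|\Omega_\lambda|\to 0$ as $\lambda\to a^+$, this yields $\boldsymbol u\le\boldsymbol u_\lambda$ in $\Omega_\lambda$ for $\lambda$ sufficiently close to $a$.

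Define $\lambda_0:=\sup\bigl\{\lambda\in(a,0]:\boldsymbol u\le \boldsymbol u_\mu\text{ in }\Omega_\mu\text{ for every }\mu\in(a,\lambda]\bigr\}$ and assume by contradiction that $\lambda_0<0$. Continuity gives $\boldsymbol u\le\boldsymbol u_{\lambda_0}$ in $\Omega_{\lambda_0}$, and \eqref{eq:fcresc} forbids $\boldsymbol u\equiv \boldsymbol u_{\lambda_0}$ on any connected component of $\Omega_{\lambda_0}$: otherwise $\boldsymbol f(x,\boldsymbol u(x))=\boldsymbol f(x_{\lambda_0},\boldsymbol u(x))$ there, contradicting strict monotonicity. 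The second-order regularity \eqref{eq:sugnufiutu} allows one to write, on $\Omega_{\lambda_0}\setminus(Z_{\boldsymbol u}\cup Z_{\boldsymbol u_{\lambda_0}})$, a linear elliptic system for $\boldsymbol w:=\boldsymbol u_{\lambda_0}-\boldsymbol u$ whose off-diagonal couplings are nonnegative by the cooperativity in \eqref{hpf}; a cooperative strong maximum principle then promotes $\boldsymbol w\ge 0$ to componentwise strict positivity on each connected component of this open set. Theorem~\ref{Reg2} guarantees $|Z_{\boldsymbol u}|=0$, so this strictness extends in the continuity sense. Fix $\delta>0$ below the constant $\lambda(\boldsymbol f,p,N,\|D\boldsymbol u\|_\infty)$ of Theorem~\ref{thm:weak1} and pick a compact $K\Subset\Omega_{\lambda_0}$ with $|\Omega_{\lambda_0}\setminus K|<\delta/2$ and $\boldsymbol w\ge 2\eta>0$ on $K$ for some $\eta$. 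By uniform continuity, for $\lambda\in(\lambda_0,\lambda_0+\eps_0)$ with $\eps_0$ small, one has $K\Subset\Omega_\lambda$, $\boldsymbol u<\boldsymbol u_\lambda$ on $K$, and $|\Omega_\lambda\setminus K|<\delta$. A second application of Theorem~\ref{thm:weak1} on the thin set $\Omega_\lambda\setminus K$ (the boundary inequality on $\partial K$ comes from the strict comparison just obtained, the one on $\partial\Omega_\lambda$ from the boundary analysis above, and \eqref{ipotesibrut} is again \eqref{ipotesisuisupporti}) gives $\boldsymbol u\le\boldsymbol u_\lambda$ on $\Omega_\lambda\setminus K$, hence on all of $\Omega_\lambda$, contradicting the definition of $\lambda_0$.

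The main obstacle is this strict comparison step in the interior. In the scalar $p$-Laplace case one lifts the weak inequality to a strict one via a standard strong maximum principle for the linearisation outside the degeneracy set; in the vectorial cooperative setting it is essential to use simultaneously \eqref{eq:sugnufiutu}, which makes the linearised system rigorously meaningful, Theorem~\ref{Reg2}, which gives $|Z_{\boldsymbol u}|=0$ and controls the weights appearing after linearisation, and a strong maximum principle for cooperative linear elliptic systems that propagates positivity from one component to the others. Once $\lambda_0=0$ is established, the analogous sweep from the right endpoint $b$ yields $\boldsymbol u(x)\le \boldsymbol u(-x_1,x_2,\ldots,x_n)$ in $\Omega_0$; combining the two inequalities gives the symmetry of $\boldsymbol u$ with respect to $T_0$, while the full family of comparisons $\boldsymbol u\le\boldsymbol u_\lambda$ for $\lambda\in(a,0]$ immediately encodes the non-decreasing monotonicity in the $x_1$-direction on $\Omega_0$.
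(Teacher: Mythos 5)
Your overall scaffolding — starting the plane near $a$ via Theorem~\ref{thm:weak1}, defining $\lambda_0$, extracting a compact $K$ of nearly full measure where the inequality is strict so that one can shift the plane and apply weak comparison on the thin remainder — matches the paper's strategy, and the treatment of boundary data, of \eqref{eq:fcresc}, and of the role of \eqref{ipotesisuisupporti} in verifying \eqref{ipotesibrut} is all correct.

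The genuine gap is the step where you ``promote $\boldsymbol w\ge 0$ to componentwise strict positivity'' via a cooperative strong maximum principle for the linearised system. The cooperativity assumption $({h_f})$-$(iii)$ is only $\partial f^\ell/\partial t_k\ge 0$ (non-strict), and the paper warns explicitly in the introduction that strong comparison principles ``are known to fail, in general, in the vectorial case.'' With merely nonnegative off-diagonal coupling there is no mechanism to propagate strict positivity from one component to another, and a decoupled example (each $f^\ell$ depending only on $t_\ell$) already defeats the claim: the scalar degenerate strong maximum principle applied componentwise would only tell you each $w^\ell$ is either identically zero or strictly positive on each component of the domain minus the critical set, and the ``identically zero'' branch is not excluded by cooperativity. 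On top of this, the linearised operator degenerates on $Z_{\boldsymbol u}\cup Z_{\boldsymbol u_{\lambda_0}}$, which you would also have to quantify before extracting a compact $K$ with $|\Omega_{\lambda_0}\setminus K|<\delta/2$ and $\boldsymbol w\ge 2\eta$ there.

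The paper's actual route avoids the strong maximum principle entirely. It first uses Proposition~\ref{nahnah} — a measure-theoretic argument via Stampacchia's theorem, enabled by the regularity \eqref{eq:sugnufiutu} and by the strict spatial monotonicity \eqref{eq:fcresc} — to show that the \emph{full} coincidence set $A=\{\boldsymbol u=\boldsymbol u_{\overline\lambda}\}$ has measure zero, and Theorem~\ref{Reg2} to show $|Z_{\boldsymbol u}|=0$. It then chooses a compact $K$ avoiding a small open neighbourhood $\mathcal B$ of $A\cup Z_{\boldsymbol u}\cup Z_{\boldsymbol u_{\overline\lambda}}$, and decomposes $K$ into $S_1$ (where $u^1<u^1_{\overline\lambda}$) and $\tilde S_1$ (where $u^1=u^1_{\overline\lambda}$, hence some other component is strict since $K\cap A=\emptyset$), further refined via compact subsets $K_1,\tilde K_2,\dots,\tilde K_N$. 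The point is that on each of these pieces \emph{some} component is strictly below, and it is precisely assumption \eqref{ipotesisuisupporti} — all components of $\boldsymbol u-\boldsymbol u_\lambda$ have the same sign — that then forces \emph{all} components to satisfy $\boldsymbol u\le\boldsymbol u_{\overline\lambda+\tau}$ after a small shift, by uniform continuity. This is how the paper obtains the boundary condition on $\partial K$ for the final weak comparison argument without any strong maximum principle. If you want to repair your proof, replace the cooperative-SMP step by exactly this combination: $|A|=0$ (Proposition~\ref{nahnah}), $|Z_{\boldsymbol u}|=0$ (Theorem~\ref{Reg2}), the covering by $S_1,\tilde S_1$, and the sign-propagation via \eqref{ipotesisuisupporti}.
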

\begin{remark}
Hypothesis \eqref{eq:sugnufiutu} is quite natural. Indeed it follows e.g. from \eqref{eq:w12} of Theorem \ref{Reg} or using \cite{BaCiDiMa, Cma}.
\end{remark}
Our paper is structured as follows. 
In Section~\ref{seconda} we prove the local regularity results given by Theorem \ref{teosecond} and by Theorem \ref{Reg2}.
In Section \ref{comparison} we prove the Symmetry result, i.e. Theorem  \ref{teoremaprincipale}.
\section{Regularity results for second derivative}\label{seconda}
\subsection*{Notation}\label{Notazioni}
Generic fixed numerical constants will be denoted by $C$ (with subscript in some case) and will be allowed to vary within a single line or formula. Moreover $f^+$  will stand for the positive a part of a function, i.e.
$f^+=\max\{f,0\}$. We also denote $|A|$ the Lebesgue measure of the set $A$.

We will use the bold style to stress the vectorial nature of  different quantities. For instance, a $N$-vectorial function $\boldsymbol{w}$ defined in $\Omega$ will be written as
$$
\boldsymbol{w}(x)=\left(w^1(x),\ldots,w^N(x) \right),
$$
where $w^\ell$ is a scalar function defined in $\Omega$ for $\ell=1,\ldots,N$.

\

We say that a vector field ${\boldsymbol u}$ weakly  solves \eqref{system1} if and only if ${\boldsymbol u} \in W^{1,p}_{0}(\Omega):=W^{1,p}_{0}(\Omega;\mathbb{R}^N)$ and 
\beq\label{weak}
\int_{\Omega} |D{\boldsymbol u}|^{p-2}D {\boldsymbol u} : D {\boldsymbol \varphi}\, dx = \int_{\Omega} \langle{\boldsymbol f}, \boldsymbol{\varphi}\rangle\, dx, \quad \forall  {\boldsymbol \varphi} \in W^{1,p}_{0}(\Omega).
\eeq
Here 
\[D {\boldsymbol u}=  \begin{pmatrix}
         \nabla u^1 \\
         \vdots \\
         \nabla u^N
        \end{pmatrix},\] 
        where 
\[\nabla u^\ell = \left(\frac{\partial u^\ell}{\partial x_1},\ldots,\frac{\partial u^\ell}{\partial x_n}\right)\,\, \text{for } \ell=1,\ldots,N,\]
and
\[\displaystyle{|D {\boldsymbol u}|=\sqrt{\sum_{\ell=1}^N\sum_{j=1}^n \left(\frac{\partial u^\ell}{\partial x_j}\right)^2}}.\] 
We also use the notation $\displaystyle u_j^\ell=\dfrac{\partial u^\ell}{\partial x_j}$. \\
We point out that along this paper the symbol $\,:\,$ stands for the scalar product of the matrices rows, namely 
$$
\mathcal{M}:\mathcal{N}=\sum_{i=1}^{q} \mathcal{M}^i \cdot \mathcal{N}^i=\sum_{i=1}^{q} \sum_{j=1}^{r} \mathcal{M}^i_j \mathcal{N}^i_j,
$$
where $\mathcal{M},\mathcal{N}$ are $(q \times r)$ real matrices, whereas $ \cdot$ denotes the scalar product of two real vectors. \\In the following we will denote the space of the $q\times r$ matrices as $\mathbb{R}^{q\times r}$ with $q,r \in \mathbb{N}$.
Observe that \eqref{weak} can be rewritten as
$$
\sum_{\ell=1}^N \int_{\Omega} |D {\boldsymbol u}|^{p-2}\langle \nabla u^\ell,  \nabla \varphi^\ell \rangle \, dx = \sum_{\ell=1}^N \int_{\Omega} f^\ell\varphi^\ell\, dx, \quad \forall {\bf\varphi} \in W^{1,p}_{0}(\Omega).
$$
In order to simplify the computations below, we define the vector
\begin{equation}\label{eq:vect}
D^{1,2}\boldsymbol{u}:=\left(\begin{array}{c}D\boldsymbol{u}:D\boldsymbol{u}_{1}\\
\vdots \\D\boldsymbol{u}:D\boldsymbol{u}_{i}\\\vdots
\\
D\boldsymbol{u}:D\boldsymbol{u}_{n}
\end{array}\right).
\end{equation}
Moreover we define
\[\|D^2 {\boldsymbol u} \|=\sqrt{\sum_{i=1}^N\sum_{j,k}(u^i_{jk})^2}.\]
In what follows we often use the inequality
\begin{equation}\label{eq:FraCo}
|D^{1,2} \boldsymbol{u}|\leq |D\boldsymbol{u}|\|D^2\boldsymbol{u}\|.
\end{equation}
\noindent We are now ready to prove Theorem \ref{teosecond}.
\begin{proof}[Proof of Theorem \ref{teosecond}]
Let $x_0\in \Omega$ and  $B=B_{2\rho(x_0)}\subset\subset\Omega$. Since $\boldsymbol u\in  C^1(\bar{\Omega})$, for any $\varepsilon>0$, let $\boldsymbol u_\varepsilon\in \boldsymbol u+W_0^{1,p}(B)$ be the unique solution to
\beq\label{system}\tag{$p$-$S_\varepsilon$}
\begin{cases}
-\operatorname{\bf div}((\varepsilon+|D{\boldsymbol u_\varepsilon}|^2)^\frac{p-2}{2}D{\boldsymbol u_\varepsilon})=   {\boldsymbol f}(x)& \mbox{in $B_{2\rho(x_0)}$}\\
{\boldsymbol u_\varepsilon}= \boldsymbol u  & \mbox{on  $\partial B_{2\rho(x_0)}$}, 
\end{cases}
\eeq
Testing \eqref{system} by  $\boldsymbol\varphi\in C_c^\infty(B)$ we get
\begin{eqnarray}\label{eq:log(-epsilon)}
\int_{B}(\varepsilon +|D{\boldsymbol u_\varepsilon}|^2)^{\frac{p-2}{2}}(D{\boldsymbol u_{\varepsilon}}: D\boldsymbol\varphi)dx
=\int_B\langle \boldsymbol{f},\boldsymbol\varphi\rangle dx.
\end{eqnarray}

The existence of such a weak solution follows by classical minimization procedure. Notice that by standard regularity results \cite{DiKaSc,  GT} the solution $\boldsymbol u_\varepsilon$ belongs to $C^{2,\alpha}(B)$. 
Fixing $i=1,\ldots,n$, using $\boldsymbol \varphi_i\in C_c^\infty(B)$ in \eqref{eq:log(-epsilon)}, we obtain
\begin{eqnarray*}
\int_B(\varepsilon +|D{\boldsymbol u_\varepsilon}|^2)^{\frac{p-2}{2}}(D{\boldsymbol u_{\varepsilon}}: D\boldsymbol\varphi_i)dx
=\int_B\langle \boldsymbol{f},\boldsymbol\varphi_i\rangle dx
\end{eqnarray*}
Integrating by parts, we get
\begin{eqnarray*}
&&\int_B(\varepsilon +|D{\boldsymbol u_\varepsilon}|^2)^{\frac{p-2}{2}}(D{\boldsymbol u_{\varepsilon,i}}: D\boldsymbol\varphi)\, dx\\\nonumber
&&+(p-2)\int_B(\varepsilon +|D{\boldsymbol u_\varepsilon}|^2)^{\frac{p-4}{2}}(D{\boldsymbol u_\varepsilon}: D{\boldsymbol u_{\varepsilon,i}})(D{\boldsymbol u_\varepsilon}: D\boldsymbol\varphi)\, dx
\\\nonumber
&=&\int_B\langle \boldsymbol{f}_i,\boldsymbol\varphi\rangle dx.
\end{eqnarray*}
For any $m\in\mathbb{R}^+$, let $G_{\frac 1m}(t)$ be defined by
\begin{equation*}				
 G_{\frac 1m}(t):=\left\{
\begin{array}{ll}
	0 & \hbox{ if } 0\leq t\leq \frac{1}{m},\\
	\displaystyle 2t- \frac{2}{m}  & \hbox{ if }  \frac{1}{m}< t< \frac{2}{m}, \\
	t & \hbox{ if } t \geq \frac{2}{m}. 
\end{array}
\right.
\end{equation*}
and for a given $\gamma>0$, 
\begin{equation}\label{eq:FraCo4}
 H_{\frac 1m}(t):= \frac{G_{\frac 1m}(t)}{t^{\gamma+1}}.
\end{equation} 
Let $\psi\in C_c^\infty(B_{2\rho}(x_0))$ such that $\psi=1$ in $B_\rho(x_0)$, $\psi=0$ in $(B_{2\rho}(x_0))^c$ and $|\nabla \psi|\leq\frac{2}{\rho}$ in the annular $B_{2\rho}(x_0)\setminus B_\rho(x_0)$. \\ \\
Let us define,
\begin{equation*}
\boldsymbol\varphi=\frac{H_{\frac 1m}(|x-y|)\psi^2}{(\varepsilon +|D\boldsymbol{u}_{\varepsilon}|^{2})^{\frac{\alpha}{2}}}\boldsymbol{u}_{\varepsilon,i}.
\end{equation*}\ \\
Then using $\varphi$ as a test function we deduce
\begin{eqnarray}\label{eq:princ}\nonumber
&&\int_B\frac{(\varepsilon +|D\boldsymbol{u}_{\varepsilon}|^2)^{\frac{p-2}{2}}}{(\varepsilon +|D\boldsymbol{u}_{\varepsilon}|^{2})^{\frac{\alpha}{2}}} |D\boldsymbol{u}_{\varepsilon,i}|^2H_{\frac 1m}(|x-y|)\psi^2 \, dx\\\nonumber
&&-\alpha \int_B\frac{(\varepsilon +|D\boldsymbol{u}_{\varepsilon}|^2)^{\frac{p-2}{2}}}{(\varepsilon +|D\boldsymbol{u}_{\varepsilon}|^2)^{\frac{\alpha+2}{2}}}(D\boldsymbol{u}_{\varepsilon,i}:\boldsymbol{u}_{\varepsilon,i}\left(D^{1,2}\boldsymbol{u}_{\varepsilon}\right)^T)H_{\frac 1m}(|x-y|)\psi^2\, dx\\
&&+(p-2)\int_B \frac{(\varepsilon +|D\boldsymbol{u}_{\varepsilon}|^2)^{\frac{p-4}{2}}}{(\varepsilon +|D\boldsymbol{u}_{\varepsilon}|^{2})^{\frac{\alpha}{2}}}(D\boldsymbol{u}_{\varepsilon}:D\boldsymbol{u}_{\varepsilon,i})^2H_{\frac 1m}(|x-y|)\psi^2\, dx\\\nonumber
&&-\alpha(p-2) \int_B\frac{(\varepsilon +|D\boldsymbol{u}_{\varepsilon}|^2)^{\frac{p-4}{2}}}{(\varepsilon +|D\boldsymbol{u}_{\varepsilon}|^2)^{\frac{\alpha+2}{2}}}(D\boldsymbol{u}_{\varepsilon}:D\boldsymbol{u}_{\varepsilon,i})(D\boldsymbol{u}_{\varepsilon}:\boldsymbol{u}_{\varepsilon,i}\left(D^{1,2}\boldsymbol{u}_{\varepsilon}\right)^T)H_{\frac 1m}(|x-y|)\psi^2\, dx\\\nonumber
\end{eqnarray}
\begin{eqnarray}
&=&\nonumber-2\int_B\frac{(\varepsilon +|D\boldsymbol{u}_{\varepsilon}|^2)^{\frac{p-2}{2}}}{(\varepsilon +|D\boldsymbol{u}_{\varepsilon}|^{2})^{\frac{\alpha}{2}}}(D\boldsymbol{u}_{\varepsilon,i}: \boldsymbol{u}_{\varepsilon,i}\nabla \psi^T)H_{\frac 1m}(|x-y|) \psi \  dx\\\nonumber
&&-\int_B\frac{(\varepsilon +|D\boldsymbol{u}_{\varepsilon}|^2)^{\frac{p-2}{2}}}{(\varepsilon +|D\boldsymbol{u}_{\varepsilon}|^{2})^{\frac{\alpha}{2}}}\left(D\boldsymbol{u}_{\varepsilon,i}: \boldsymbol{u}_{\varepsilon,i}\left(\nabla H_{\frac 1m}(|x-y|)\right)^T\right)\psi ^2 \, dx\\
\nonumber
&&-2(p-2)\int_B\frac{(\varepsilon +|D\boldsymbol{u}_{\varepsilon}|^2)^{\frac{p-4}{2}}}{(\varepsilon +|D\boldsymbol{u}_{\varepsilon}|^{2})^{\frac{\alpha}{2}}}(D\boldsymbol{u}_{\varepsilon}:D\boldsymbol{u}_{\varepsilon,i})(D\boldsymbol{u}_{\varepsilon}: \boldsymbol{u}_{\varepsilon,i}\nabla \psi^T)H_{\frac 1m}(|x-y|)\psi \, dx\\\nonumber
&&-(p-2)\int_B\frac{(\varepsilon +|D\boldsymbol{u}_{\varepsilon}|^2)^{\frac{p-4}{2}}}{(\varepsilon +|D\boldsymbol{u}_{\varepsilon}|^{2})^{\frac{\alpha}{2}}}(D\boldsymbol{u}_{\varepsilon}:D\boldsymbol{u}_{\varepsilon,i})\left(D\boldsymbol{u}_{\varepsilon}: \boldsymbol{u}_{\varepsilon,i}\left(\nabla H_{\frac 1m}(|x-y|)\right)^T\right)\psi^2 \, dx\\
\nonumber
&&+\int_B\langle \boldsymbol{f}_i,\boldsymbol\varphi\rangle dx.
\end{eqnarray}
Exploiting also a nice computation found  in \cite{M}, we deduce that
\begin{eqnarray}\label{terminesinistra}
&&\sum_{i=1}^{n}\left\{ \int_B\frac{(\varepsilon +|D\boldsymbol{u}_{\varepsilon}|^2)^{\frac{p-2}{2}}}{(\varepsilon +|D\boldsymbol{u}_{\varepsilon}|^{2})^{\frac{\alpha}{2}}} |D\boldsymbol{u}_{\varepsilon,i}|^2H_{\frac 1m}(|x-y|)\psi^2 \, dx\right.
\\\nonumber
&&-\alpha \int_B\frac{(\varepsilon +|D\boldsymbol{u}_{\varepsilon}|^2)^{\frac{p-2}{2}}}{(\varepsilon +|D\boldsymbol{u}_{\varepsilon}|^2)^{\frac{\alpha+2}{2}}}(D\boldsymbol{u}_{\varepsilon,i}:\boldsymbol{u}_{\varepsilon,i}\left(D^{1,2}\boldsymbol{u}_{\varepsilon}\right)^T)H_{\frac 1m}(|x-y|)\psi^2\, dx\\\nonumber
&&+(p-2)\int_B \frac{(\varepsilon +|D\boldsymbol{u}_{\varepsilon}|^2)^{\frac{p-4}{2}}}{(\varepsilon +|D\boldsymbol{u}_{\varepsilon}|^{2})^{\frac{\alpha}{2}}}(D\boldsymbol{u}_{\varepsilon}:D\boldsymbol{u}_{\varepsilon,i})^2H_{\frac 1m}(|x-y|)\psi^2\, dx\\\nonumber
\\\nonumber
&&\left.-\alpha(p-2) \int_B\frac{(\varepsilon +|D\boldsymbol{u}_{\varepsilon}|^2)^{\frac{p-4}{2}}}{(\varepsilon +|D\boldsymbol{u}_{\varepsilon}|^2)^{\frac{\alpha+2}{2}}}(D\boldsymbol{u}_{\varepsilon}:D\boldsymbol{u}_{\varepsilon,i})(D\boldsymbol{u}_{\varepsilon}:\boldsymbol{u}_{\varepsilon,i}D^{1,2}\boldsymbol{u}_{\varepsilon}^T)H_{\frac 1m}(|x-y|)\psi^2\, dx\right\}\\\nonumber
&&=\int_B(\varepsilon +|D\boldsymbol{u}_{\varepsilon}|^2)^{\frac{p-2-\alpha}{2}} \|D^2\boldsymbol{u}_{\varepsilon}\|^2H_{\frac 1m}(|x-y|)\psi^2 \, dx\\\nonumber
&&+(p-2-\alpha)\int_B(\varepsilon +|D\boldsymbol{u}_{\varepsilon}|^2)^{\frac{p-4-\alpha}{2}} \sum_{i=1}^n(D\boldsymbol{u}_\varepsilon:D\boldsymbol{u}_{\varepsilon,i})^2H_{\frac 1m}(|x-y|)\psi^2 \, dx\\\nonumber
&&-\alpha(p-2)\int_B(\varepsilon +|D\boldsymbol{u}_{\varepsilon}|^2)^{\frac{p-6-\alpha}{2}}\sum_{k=1}^N\langle D^{1,2}\boldsymbol{u}_{\varepsilon}, \nabla u^k\rangle \langle D^{1,2}\boldsymbol{u}_{\varepsilon}, \nabla u^k\rangle H_{\frac 1m}(|x-y|)\psi^2 \, dx. 
\end{eqnarray}
We distinguish two case: If $1<p\leq 2$ from \eqref{terminesinistra} we deduce 
\begin{eqnarray}\label{FraCo1}
&&\sum_{i=1}^{n}\int_B{(\varepsilon +|D\boldsymbol{u}_{\varepsilon}|^2)^{\frac{p-2-\alpha}{2}}} |D\boldsymbol{u}_{\varepsilon,i}|^2H_{\frac 1m}(|x-y|)\psi^2 \, dx \\\nonumber
&&\geq\int_B(\varepsilon +|D\boldsymbol{u}_{\varepsilon}|^2)^{\frac{p-2-\alpha}{2}} \|D^2\boldsymbol{u}_{\varepsilon}\|^2H_{\frac 1m}(|x-y|)\psi^2 \, dx\\\nonumber
&&+(p-2-\alpha)\int_B(\varepsilon +|D\boldsymbol{u}_{\varepsilon}|^2)^{\frac{p-4-\alpha}{2}} \sum_{i=1}^n(D\boldsymbol{u}_\varepsilon:D\boldsymbol{u}_{\varepsilon,i})^2H_{\frac 1m}(|x-y|)\psi^2 \, dx\\\nonumber
&&\geq (p-1-\alpha)\int_B(\varepsilon +|D\boldsymbol{u}_{\varepsilon}|^2)^{\frac{p-2-\alpha}{2}} \|D^2\boldsymbol{u}_{\varepsilon}\|^2H_{\frac 1m}(|x-y|)\psi^2 \, dx,
\end{eqnarray}
where in the last line we used Cauchy-Schwarz inequality. In the case $p>2$, by \eqref{eq:FraCo}, we observe that 
\begin{eqnarray}\label{eq:FraCo2}
&&\sum_{k=1}^N\langle D^{1,2}\boldsymbol{u}_{\varepsilon}, \nabla u^k\rangle \langle D^{1,2}\boldsymbol{u}_{\varepsilon}, \nabla u^k\rangle=\sum_{k=1}^N\langle D^{1,2}\boldsymbol{u}_{\varepsilon}, \nabla u^k\rangle^2\\\nonumber
&&=\|D\boldsymbol{u}_{\varepsilon}D^{1,2}\boldsymbol{u}_{\varepsilon}\|^2\leq \|D\boldsymbol{u}_{\varepsilon}\|^4\|D^{1,2}\boldsymbol{u}_{\varepsilon}\|^2.
\end{eqnarray}
Therefore plugging \eqref{eq:FraCo2} in \eqref{terminesinistra} we obtain 
\begin{eqnarray}\label{FraCo11}
&&\sum_{i=1}^{n}\int_B{(\varepsilon +|D\boldsymbol{u}_{\varepsilon}|^2)^{\frac{p-2-\alpha}{2}}} |D\boldsymbol{u}_{\varepsilon,i}|^2H_{\frac 1m}(|x-y|)\psi^2 \, dx \\\nonumber
&\geq&\min\{1,(p-1)(1-\alpha)\}\int_B(\varepsilon +|D\boldsymbol{u}_{\varepsilon}|^2)^{\frac{p-2-\alpha}{2}} \|D^2\boldsymbol{u}_{\varepsilon}\|^2H_{\frac 1m}(|x-y|)\psi^2 \, dx.
\end{eqnarray}

Let us continue by estimating each single term on the right of \eqref{eq:princ}, at first considering their norm and then the sum over $i$.  Observe that 
\begin{eqnarray}\label{aa}
\nonumber &&|(D\boldsymbol{u}_{\varepsilon,i}: \boldsymbol{u}_{\varepsilon,i}\nabla \psi^T)|\leq |D\boldsymbol{u}_{\varepsilon,i}| |\boldsymbol{u}_{\varepsilon,i}\nabla \psi^T|=|D\boldsymbol{u}_{\varepsilon,i}|\sqrt{\sum_{j=1}^N\sum_{k=1}^n(u^j_{\varepsilon,i}\psi_k)^2}\\&&=|D\boldsymbol{u}_{\varepsilon,i}|\sqrt{|\nabla \psi|^2\sum_{j=1}^N(u^j_{\varepsilon,i})^2}\leq |D\boldsymbol{u}_{\varepsilon,i}| |D\boldsymbol{u}_{\varepsilon}||\nabla \psi|.
\end{eqnarray}
Using weighted Young inequality and \eqref{aa}, taking into account that $G_{\frac 1m}(|x-y|)\leq|x-y|$, we have
\begin{eqnarray}\label{eq:13est1}\nonumber&&2\int_B(\varepsilon +|D\boldsymbol{u}_{\varepsilon}|^2)^{\frac{p-2-\alpha}{2}}|(D\boldsymbol{u}_{\varepsilon,i}: \boldsymbol{u}_{\varepsilon,i}\nabla \psi^T)|H_{\frac 1m }(|x-y|)\psi \, dx\\\nonumber
&\leq& 2\int_B(\varepsilon +|D\boldsymbol{u}_{\varepsilon}|^2)^{\frac{p-2-\alpha}{2}}|D\boldsymbol{u}_{\varepsilon,i}||D\boldsymbol{u}_{\varepsilon}||\nabla \psi|H_{\frac 1m }(|x-y|)\psi \, dx\\\nonumber
&\leq& 2\delta\int_B(\varepsilon +|D\boldsymbol{u}_{\varepsilon}|^2)^{\frac{p-2-\alpha}{2}}|D\boldsymbol{u}_{\varepsilon,i}|^2\frac {\psi^2}{|x-y|^{\gamma}} \, dx\\\nonumber
&&+\frac{1}{2\delta}\int_B(\varepsilon +|D\boldsymbol{u}_{\varepsilon}|^2)^{\frac{p-2-\alpha}{2}}|D\boldsymbol{u}_{\varepsilon}|^2|\nabla \psi|^2\frac {1}{|x-y|^{\gamma}} \, dx\\
&\leq& 2\delta\int_B(\varepsilon +|D\boldsymbol{u}_{\varepsilon}|^2)^{\frac{p-2-\alpha}{2}}|D\boldsymbol{u}_{\varepsilon,i}|^2\frac {\psi^2}{|x-y|^{\gamma}}  \, dx\\\nonumber
&&+\frac{1}{2\delta}\int_B{(\varepsilon +|D\boldsymbol{u}_{\varepsilon}|^2)^{\frac{p-\alpha}{2}}}|\nabla \psi|^2\frac {1}{|x-y|^{\gamma}} \, dx
\\\nonumber
&\leq& 2\delta\int_B(\varepsilon +|D\boldsymbol{u}_{\varepsilon}|^2)^{\frac{p-2-\alpha}{2}}|D\boldsymbol{u}_{\varepsilon,i}|^2\frac {\psi^2}{|x-y|^{\gamma}} \, dx+C(p,\alpha,\gamma,\delta,\rho),
\end{eqnarray}
where in the last line we used the fact   $p>\alpha$ and $|D\boldsymbol{u}_{\varepsilon}|\leq C$ for some constant uniformly on $\varepsilon$ (see \cite{DiKaSc}).

Summing up on $i=1,\dots,n$
\begin{eqnarray}\label{magg1}
 &&2\sum_{i=1}^{n}\int_B(\varepsilon +|D\boldsymbol{u}_{\varepsilon}|^2)^{\frac{p-2-\alpha}{2}}|(D\boldsymbol{u}_{\varepsilon,i}: \boldsymbol{u}_{\varepsilon}\nabla \psi^T)|H_{\frac 1m }(|x-y|)\psi \, dx
 \\\nonumber
 &\leq& 2\delta\int_B(\varepsilon +|D\boldsymbol{u}_{\varepsilon}|^2)^{\frac{p-2-\alpha}{2}}\|D^2\boldsymbol{u}_{\varepsilon}\|^2\frac {\psi^2}{|x-y|^{\gamma}} \, dx+C(n,p,\alpha,\gamma,\delta, \rho),
\end{eqnarray}

As in \eqref{eq:13est1},  by weighted Young inequality,  we get
\begin{eqnarray}\label{eq:13est2}\nonumber
&&2|(p-2)|\int_B(\varepsilon +|D\boldsymbol{u}_{\varepsilon}|^2)^{\frac{p-4-\alpha}{2}}|(D\boldsymbol{u}_{\varepsilon}:D\boldsymbol{u}_{\varepsilon,i})||(D\boldsymbol{u}_{\varepsilon}: \boldsymbol{u}_{\varepsilon,i}\nabla \psi^T)|H_{\frac 1m}(|x-y|)\psi \, dx\\\nonumber
&\leq& 2|p-2|\int_B(\varepsilon +|D\boldsymbol{u}_{\varepsilon}|^2)^{\frac{p-2-\alpha}{2}}|D\boldsymbol{u}_{\varepsilon}||D\boldsymbol{u}_{\varepsilon,i}||\nabla\psi| H_{\frac 1m }(|x-y|)\psi \, dx\\\nonumber
&\leq& 2\delta|p-2|\int_B(\varepsilon +|D\boldsymbol{u}_{\varepsilon}|^2)^{\frac{p-2-\alpha}{2}}|D\boldsymbol{u}_{\varepsilon,i}|^2\frac {\psi^2}{|x-y|^{\gamma}} \,dx\\&&+\frac{|p-2|}{2\delta}\int_B(\varepsilon +|D\boldsymbol{u}_{\varepsilon}|^2)^{\frac{p-2-\alpha}{2}}|D\boldsymbol{u}_{\varepsilon}|^2|\nabla \psi|^2\frac{1}{|x-y|^{\gamma}} \, dx
\\\nonumber
&\leq& 2\delta|p-2|\int_B(\varepsilon +|D\boldsymbol{u}_{\varepsilon}|^2)^{\frac{p-2-\alpha}{2}}|D\boldsymbol{u}_{\varepsilon,i}|^2\frac {\psi^2}{|x-y|^{\gamma}} \,dx\\\nonumber&&+\frac{|p-2|}{2\delta}\int_B{(\varepsilon +|D\boldsymbol{u}_{\varepsilon}|^2)^{\frac{p-\alpha}{2}}}|\nabla \psi|^2\frac{1}{|x-y|^{\gamma}} \, dx\\\nonumber
&\leq& 2\delta|p-2|\int_B(\varepsilon +|D\boldsymbol{u}_{\varepsilon}|^2)^{\frac{p-2-\alpha}{2}}|D\boldsymbol{u}_{\varepsilon,i}|^2\frac {\psi^2}{|x-y|^{\gamma}} \,dx+C(p,\alpha,\gamma,\delta, \rho).
\end{eqnarray}
Then
\begin{eqnarray}\label{magg2}
	&&\nonumber2|p-2|\sum_{i=1}^{n}\int_B(\varepsilon +|D\boldsymbol{u}_{\varepsilon}|^2)^{\frac{p-4-\alpha}{2}}|(D\boldsymbol{u}_{\varepsilon}:D\boldsymbol{u}_{\varepsilon,i})||(D\boldsymbol{u}_{\varepsilon}: \boldsymbol{u}_{\varepsilon,i}\nabla \psi^T)|H_{\frac 1m}(|x-y|)\psi \, dx
	\\
	&\leq& 2\delta|p-2|\int_B(\varepsilon +|D\boldsymbol{u}_{\varepsilon}|^2)^{\frac{p-2-\alpha}{2}}\|D^2\boldsymbol{u}_{\varepsilon}\|^2\frac {\psi^2}{|x-y|^{\gamma}} \, dx\\\nonumber && +C(n,p,\alpha,\gamma,\delta, \rho).
\end{eqnarray}

Arguing as in \eqref{eq:13est1}, following \eqref{aa},  inasmuch
\begin{equation}\label{bb}
\left|\left(D\boldsymbol{u}_{\varepsilon,i}: \boldsymbol{u}_{\varepsilon ,i}\left(\nabla H_{\frac 1m}(|x-y|)\right)^T\right)\right|\leq |D\boldsymbol{u}_{\varepsilon,i}||D\boldsymbol{u}_{\varepsilon}||\nabla H_{\frac 1m}(|x-y|)|.
\end{equation}
By \eqref{eq:FraCo4}, \eqref{bb} and weighted Young inequality,  we get
\begin{eqnarray}\label{11}    
 \nonumber &&\int_B(\varepsilon +|D\boldsymbol{u}_{\varepsilon}|^2)^{\frac{p-2-\alpha}{2}}\left|\left(D\boldsymbol{u}_{\varepsilon,i}: \boldsymbol{u}_{\varepsilon,i}\left(\nabla H_{\frac 1m}(|x-y|)\right)^T\right)\right|\psi ^2 \, dx \\  \nonumber&\leq&\int_B(\varepsilon +|D\boldsymbol{u}_{\varepsilon}|^2)^{\frac{p-2-\alpha}{2}}|D\boldsymbol{u}_{\varepsilon,i}||D\boldsymbol{u}_{\varepsilon}||\nabla H_{\frac 1m}(|x-y|)|\psi ^2 \,dx\\ 
\nonumber&\leq&\int_B(\varepsilon +|D\boldsymbol{u}_{\varepsilon}|^2)^{\frac{p-2-\alpha}{2}}|D\boldsymbol{u}_{\varepsilon,i}||D\boldsymbol{u}_{\varepsilon}|\frac{G'_{\frac 1m}(|x-y|)+(\gamma+1)\frac{G_m(|x-y|)}{|x-y|}}{|x-y|^{\gamma+1}}\psi ^2 \,dx\\\nonumber
 &\leq&C_\gamma\delta\int_B(\varepsilon +|D\boldsymbol{u}_{\varepsilon}|^2)^{\frac{p-2-\alpha}{2}}|D\boldsymbol{u}_{\varepsilon,i}|^2\frac{\psi ^2}{|x-y|^\gamma} \,dx \\ 
 \nonumber &&+\frac{C_\gamma}{4\delta} \int_B(\varepsilon +|D\boldsymbol{u}_{\varepsilon}|^2)^{\frac{p-\alpha}{2}}\frac {\psi^2}{|x-y|^{\gamma+2}}\,dx \\  
  \nonumber&\leq& C_\gamma \delta\int_B(\varepsilon +|D\boldsymbol{u}_{\varepsilon}|^2)^{\frac{p-2-\alpha}{2}}|D\boldsymbol{u}_{\varepsilon,i}|^2\frac {\psi^2}{|x-y|^{\gamma}}\,dx + C(p,\alpha,\gamma,\delta, \rho),
\end{eqnarray}
where we have used the fact that $\gamma+2<n$, and $C_{\gamma}$ and $C(p,\alpha,\gamma,\delta, \rho)$ are positive constants.
This implies that
\begin{eqnarray}\label{magg3}
	&&\sum_{i=1}^{n}\int_B(\varepsilon +|D\boldsymbol{u}_{\varepsilon}|^2)^{\frac{p-2-\alpha}{2}}\left|\left(D\boldsymbol{u}_{\varepsilon,i}: \boldsymbol{u}_{\varepsilon,i}\left(\nabla H_{\frac 1m}(|x-y|)\right)^T\right)\right|\psi ^2 \, dx
	\\\nonumber
	&\leq&C_\gamma \delta\int_B(\varepsilon +|D\boldsymbol{u}_{\varepsilon}|^2)^{\frac{p-2-\alpha}{2}}\|D^2\boldsymbol{u}_{\varepsilon}\|^2\frac {\psi^2}{|x-y|^{\gamma}}\,dx + C(n,p,\alpha,\gamma,\delta).
\end{eqnarray}
Similarly,  we see that
\begin{eqnarray}\label{22}
 \nonumber &&|p-2|\int_B(\varepsilon +|D\boldsymbol{u}_{\varepsilon}|^2)^{\frac{p-4-\alpha}{2}}\left|(D\boldsymbol{u}_{\varepsilon}:D\boldsymbol{u}_{\varepsilon,i})\right|\left|\left(D\boldsymbol{u}_{\varepsilon}: \boldsymbol{u}_{\varepsilon,i}\left(\nabla H_{\frac 1m }(|x-y|)\right)^T\right)\right|\psi^2 \, dx\\ 
 \nonumber &\leq& |p-2|\int_B{(\varepsilon +|D\boldsymbol{u}_{\varepsilon}|^2)^{\frac{p-4-\alpha}{2}}}(\varepsilon +|D\boldsymbol{u}_{\varepsilon}|^2)|D\boldsymbol{u}_{\varepsilon,i}|
 |D\boldsymbol{u}_{\varepsilon}| |\nabla H_{\frac 1m}(|x-y|)|\psi^2 \, dx \\ 
 \nonumber &\leq& C_{\gamma}\delta|p-2|\int_B(\varepsilon +|D\boldsymbol{u}_{\varepsilon}|^2)^{\frac{p-2-\alpha}{2}}|D\boldsymbol{u}_{\varepsilon,i}|^2
 \frac{\psi^2}{|x-y|^{\gamma}}\, dx \\ 
 &&+C_{\gamma}\frac{|p-2|}{4\delta }\int_B(\varepsilon +|D\boldsymbol{u}_{\varepsilon}|^2)^{\frac{p-\alpha}{2}} \frac{\psi^2}{|x-y|^{\gamma+2}} \, dx \\ 
 \nonumber &\leq&C_{\gamma}\delta|p-2|\int_B(\varepsilon +|D\boldsymbol{u}_{\varepsilon}|^2)^{\frac{p-2-\alpha}{2}}|D\boldsymbol{u}_{\varepsilon,i}|^2
 \frac{\psi^2}{|x-y|^{\gamma}} \, dx +C(p,\alpha,\gamma,\delta,\rho).
\end{eqnarray}
So we conclude that
\begin{eqnarray}\label{magg4}
	&&\nonumber|p-2|\sum_{i=1}^{n}\int_B(\varepsilon +|D\boldsymbol{u}_{\varepsilon}|^2)^{\frac{p-4-\alpha}{2}}\left|(D\boldsymbol{u}_{\varepsilon}:D\boldsymbol{u}_{\varepsilon,i})\right|\left|\left(D\boldsymbol{u}_{\varepsilon}: \boldsymbol{u}_{\varepsilon,i}\left(\nabla H_{\frac 1m }(|x-y|)\right)^T\right)\right|\psi^2 \, dx\\ 
	&\leq& C_\gamma\delta|p-2|\int_B(\varepsilon +|D\boldsymbol{u}_{\varepsilon}|^2)^{\frac{p-2-\alpha}{2}}\|D^2\boldsymbol{u}_{\varepsilon}\|^2\frac {\psi^2}{|x-y|^{\gamma}} \, dx+C(n,p,\alpha,\gamma,\delta, \rho).
\end{eqnarray}
Using estimations \eqref{FraCo1}, \eqref{FraCo11}, \eqref{magg1}, \eqref{magg2}, \eqref{magg3}, \eqref{magg4}, exploiting the Fatou Lemma,  for $m\to+\infty$ and redenoting by $C_\gamma:=2+C_\gamma$ we deduce:

if $1<p\leq 2$
\begin{eqnarray}\label{eq:princ11}
&&(p-1-\alpha)\int_B(\varepsilon +|D\boldsymbol{u}_{\varepsilon}|^2)^{\frac{p-2-\alpha}{2}}\|D^2\boldsymbol{u}_{\varepsilon}\|^2\frac{\psi^2}{|x-y|^\gamma}\, dx\\\nonumber
&\leq& C_\gamma\delta \max\{(p-1), (3-p)\}\int_B(\varepsilon +|D\boldsymbol{u}_{\varepsilon}|^2)^{\frac{p-2-\alpha}{2}}\|D^2\boldsymbol{u}_{\varepsilon}\|^2\frac{\psi^2}{|x-y|^\gamma} \, dx
\\\nonumber
\nonumber&&
+\sum_{i=1}^{n}\int_B\langle \boldsymbol{f}_i,\boldsymbol\varphi\rangle dx
+C(n, p,\alpha,\gamma, \delta,\rho);
\end{eqnarray}

if $p> 2$
\begin{eqnarray}\label{eq:princ111}
&&\min \{1,(p-1)(1-\alpha)\}\int_B(\varepsilon +|D\boldsymbol{u}_{\varepsilon}|^2)^{\frac{p-2-\alpha}{2}}\|D^2\boldsymbol{u}_{\varepsilon}\|^2\frac{\psi^2}{|x-y|^\gamma}\, dx\\\nonumber
&\leq& C_\gamma\delta \max\{(p-1), (3-p)\}\int_B(\varepsilon +|D\boldsymbol{u}_{\varepsilon}|^2)^{\frac{p-2-\alpha}{2}}\|D^2\boldsymbol{u}_{\varepsilon}\|^2\frac{\psi^2}{|x-y|^\gamma} \, dx
\\\nonumber
\nonumber&&
+\sum_{i=1}^{n}\int_B\langle \boldsymbol{f}_i,\boldsymbol\varphi\rangle dx
+C(n, p,\alpha,\gamma, \delta,\rho).
\end{eqnarray}

There exists $\delta=\delta(p,\alpha,\gamma)$ such that  both 
\[(p-1-\alpha) - C_\gamma\delta \max\{(p-1), (3-p)\}:= \beta>0,\]
and
\[\min \{1,(p-1)(1-\alpha)\} - C_\gamma\delta \max\{(p-1), (3-p)\}:= \beta>0,\]
hold.
Therefore from \eqref{eq:princ11} and \eqref{eq:princ111} we obtain
\begin{eqnarray}\label{eq:princ1i}
\\\nonumber
&&\beta\int_B(\varepsilon +|D\boldsymbol{u}_{\varepsilon}|^2)^{\frac{p-2-\alpha}{2}}\|D^2\boldsymbol{u}_{\varepsilon}\|^2\frac{\psi^2}{|x-y|^\gamma} \, dx\\\nonumber
&&\leq\int_B \sum_{i=1}^{n}\sum_{j=1}^N {f}^j_{i}(x)\varphi^j dx
+C(n,p,\alpha,\gamma, \delta,\rho)\leq C,
\end{eqnarray}
where in the last inequality we have used $\boldsymbol f\in W_{loc}^{1,\frac{n}{n-\gamma-s}}(\Omega)$ and where $C=C(\boldsymbol f, n,p,\alpha,\gamma,\rho)$ is a positive constant. \\
Let us consider a compact set $K\subset \subset B \setminus Z_{\boldsymbol u}$. We note that, by \cite{DiKaSc}, $\boldsymbol{u}_\varepsilon$ is uniformly bounded in $C^{1,\beta}(K)$, for some $0<\beta<1$. Moreover, by Dirichlet datum in \eqref{system}, it follows that \begin{equation}\label{evvai}
\boldsymbol{u}_\varepsilon\rightarrow \boldsymbol u \quad \text{in the norm }\|\cdot\|_{C^{1,\beta}(K)}.
\end{equation}\  
By Schauder estimates (that it can be applied to each component) we remark that $$\|\boldsymbol{u}_\varepsilon\|_{C^{2,\beta}(K)}\leq C$$
where $\beta\in (0,1)$ and $C$ is a positive constant not depending on $\varepsilon$. So, by last inequality,  it follows that the second limit in \eqref{evvai} holds in $C^2(K).$ Therefore, exploiting Fatou's Lemma, we deduce that 
\begin{eqnarray*}%\label{eq:princ1iSs2}
\int_{B_\rho(x_0)\setminus Z_{\boldsymbol u}}  \frac{|D\boldsymbol{u}|^{p-2-\alpha} \|D^2\boldsymbol{u}\|^2}{|x-y|^\gamma} \, dx\leq C,
\end{eqnarray*}
where $C=C(\boldsymbol f, n,p,\alpha,\gamma,\rho)$ is a positive constant. \\
If $\tilde{\Omega}\subset\subset \Omega$,  by finite covering argument, 
\begin{eqnarray*}
\int_{\tilde{\Omega}\setminus Z_{\boldsymbol u}} \frac{|D\boldsymbol{u}|^{p-2-\alpha} \|D^2\boldsymbol{u}\|^2}{|x-y|^\gamma} \, dx\leq C.
\end{eqnarray*}
This conclude the proof.
\end{proof}
\noindent Now we are in the position to prove:
\begin{proof}[Proof of Theorem \ref{Reg2}] For any  $m\in\mathbb{R}^+$ let $G_{\frac 1m}(t)$ and $$\displaystyle H_{\frac 1m}(t):= \frac{G_{\frac 1m}(t)}{t^{\gamma+1}},$$ as in Theorem~\ref{teosecond}. Moreover let $x_0\in \Omega$ and $\rho>0$ such that $B=B_{2\rho}(x_0)\subset \Omega$. Define $\varphi\in C_c^\infty(\Omega)$ such that $\varphi=1$ on $B_{\rho}(x_0)$, $\displaystyle |\nabla \varphi|<\frac{2}{r}$ on $B_{2\rho}(x_0)\setminus B_{\rho}(x_0)$ and $\varphi=0$ otherwise. Let $\boldsymbol u_\varepsilon$  be the solution to \eqref{system} with $\boldsymbol f(x):=\boldsymbol f(x,\boldsymbol u(x))$ and $\boldsymbol u$ solution to \eqref{system1}.
\\  Consider the test function $\boldsymbol \psi=(0,\cdots,\psi^i,\cdots,0)$, where  
$$\psi^i=\frac{\varphi^2}{(\frac 1m +|D\boldsymbol u_\varepsilon|^2)^\frac{\sigma}{2}}H_{\frac 1m}(|x-y|).$$
By \eqref{system} and \eqref{eq:FraCo} we obtain 
\begin{eqnarray}\label{eq:stimapes}\\\nonumber
&&\int_{\Omega}f^i\frac{\varphi^2}{(\frac 1m +|D\boldsymbol u_\varepsilon|^2)^\frac{\sigma}{2}}H_{\frac 1m}(|x-y|)\, dx=\int_\Omega (\varepsilon +|D{\boldsymbol u_\varepsilon}|^2)^{\frac{p-2}{2}}\langle\nabla u^i_\varepsilon, \nabla \psi^i\rangle\, dx\\\nonumber
&=&-\sigma\int_\Omega(\varepsilon +|D{\boldsymbol u_\varepsilon}|^2)^{\frac{p-2}{2}}\langle \nabla u^i_\varepsilon, D^{1,2}\boldsymbol{u}_\varepsilon\rangle\frac{H_{\frac 1m}(|x-y|)\varphi^2}{(\frac 1m+|D\boldsymbol u_\varepsilon|^2)^{\frac{\sigma+2}{2}}}\, dx\\\nonumber
&&+
2\int_\Omega(\varepsilon +|D{\boldsymbol u_\varepsilon}|^2)^{\frac{p-2}{2}}\langle \nabla u^i_\varepsilon, \nabla \varphi\rangle\frac{H_{\frac 1m}(|x-y|)\varphi}{(\frac 1m+|D\boldsymbol u_\varepsilon|^2)^{\frac{\sigma}{2}}}\, dx \\\nonumber
&&+\int_\Omega (\varepsilon +|D{\boldsymbol u_\varepsilon}|^2)^{\frac{p-2}{2}}\langle\nabla u^i_\varepsilon, \nabla H_{\frac 1m}(|x-y|)\rangle\frac{\varphi^2}{(\frac 1m+|D\boldsymbol u_\varepsilon|^2)^{\frac{\sigma}{2}}}\, dx 
\\\nonumber
&\leq& \sigma\int_\Omega (\varepsilon +|D{\boldsymbol u_\varepsilon}|^2)^{\frac{p-1}{2}}|D{\boldsymbol u_\varepsilon}|\|D^2\boldsymbol{u}_\varepsilon\| \frac{H_{\frac 1m}(|x-y|)\varphi^2}{(\frac 1m+|D\boldsymbol u_\varepsilon|^2)^{\frac{\sigma+2}{2}}}\, dx
\\\nonumber
&&+
2\int_{B_{2\rho}}(\varepsilon +|D{\boldsymbol u_\varepsilon}|^2)^{\frac{p-1}{2}}|\nabla \varphi|\frac{H_{\frac 1m}(|x-y|)}{(\frac 1m+|D\boldsymbol u_\varepsilon|^2)^{\frac{\sigma}{2}}}\, dx
\\\nonumber
&&+\int_{B_{2\rho}} (\varepsilon +|D{\boldsymbol u_\varepsilon}|^2)^{\frac{p-1}{2}}|\nabla H_{\frac 1m}(|x-y|)|\frac{1}{(\frac 1m+|D\boldsymbol u_\varepsilon|^2)^{\frac{\sigma}{2}}}\, dx. 
\end{eqnarray}
We estimate the three terms in the right hand side of \eqref{eq:stimapes}. By weighted Young inequality we have
\begin{eqnarray}\label{eq:stimapes1}
&&\sigma\int_\Omega (\varepsilon +|D{\boldsymbol u_\varepsilon}|^2)^{\frac{p-1}{2}}|D{\boldsymbol u_\varepsilon}|\|D^2\boldsymbol{u}_\varepsilon\| \frac{H_{\frac 1m}(|x-y|)\varphi^2}{(\frac 1m+|D\boldsymbol u_\varepsilon|^2)^{\frac{\sigma+2}{2}}}\, dx\\\nonumber
&&\nonumber \leq \theta\sigma\int_\Omega\frac{H_{\frac 1m}(|x-y|)\varphi^2}{(\frac 1m+|D\boldsymbol u_\varepsilon|^2)^{\frac{\sigma}{2}}} \, dx+\frac{\sigma}{4\theta}\int_\Omega \frac{H_{\frac 1m}(|x-y|)\varphi^2}{(\frac 1m+|D\boldsymbol u_\varepsilon|^2)^{\frac{\sigma+2}{2}}}(\varepsilon+|D{\boldsymbol u_\varepsilon}|^2)^{(p-1)}\|D^2\boldsymbol{u}_\varepsilon\|^2\, dx. 
\end{eqnarray}	
For the second term, since $|D\boldsymbol{u}_{\varepsilon}|\leq C$ for some constant uniformly on $\varepsilon$ (see \cite{DiKaSc}), we deduce
\begin{eqnarray}\label{eq:stimapes2}
&&\int_{B_{2\rho}}(\varepsilon +|D{\boldsymbol u_\varepsilon}|^2)^{\frac{p-1}{2}}|\nabla \varphi|\frac{H_{\frac 1m}(|x-y|)}{(\frac 1m+|D\boldsymbol u_\varepsilon|^2)^{\frac{\sigma}{2}}}\, dx\\\nonumber
&\leq& C(\rho) \int_{B_{2\rho}} \left (\varepsilon+|D{\boldsymbol u_\varepsilon}|^2\right)^{\frac{p-1}{2}-\frac{\sigma}{2}}\frac{1}{|x-y|^\gamma}\, dx\leq C(p,\rho, \sigma, \gamma),
\end{eqnarray}  
where we choose $m=\frac {1}{\varepsilon}$
and we use that $\sigma <(p-1)$ and $\gamma< n-2$. We observe that the constant $C$ does not depend on the position of $y$ in $\Omega$.	
Similarly to above we obtain
\begin{eqnarray}\label{eq:stimapes3}
&&\int_{B_{2\rho}} (\varepsilon +|D{\boldsymbol u_\varepsilon}|^2)^{\frac{p-1}{2}}|\nabla H_{\frac 1m}(|x-y|)|\frac{1}{(\frac 1m+|D\boldsymbol u_\varepsilon|^2)^{\frac{\sigma}{2}}}\, dx\\\nonumber &\leq& C_\gamma\int_{B_{2\rho}}  \left (\varepsilon+|D{\boldsymbol u}_\varepsilon|^2\right)^{\frac{p-1}{2}-\frac{\sigma}{2}}\frac{1}{|x-y|^{\gamma+1}}\, dx \leq C(p,\rho, \sigma, \gamma).
\end{eqnarray}	
Using estimates \eqref{eq:stimapes1}, \eqref{eq:stimapes2} and \eqref{eq:stimapes3} in \eqref{eq:stimapes} we deduce,  recalling that,  $f^i\geq \tau >0$,
\begin{eqnarray}\nonumber\\\nonumber
&&\tau\int_{\Omega}\frac{H_{\frac 1m}(|x-y|)\varphi^2}{(\frac 1m +|D\boldsymbol u_\varepsilon|^2)^\frac{\sigma}{2}}\, dx\leq \theta\sigma\int_\Omega\frac{H_{\frac 1m}(|x-y|)\varphi^2}{(\frac 1m+|D\boldsymbol u_\varepsilon|^2)^{\frac{\sigma}{2}}} \, dx \\\nonumber &&+\frac{\sigma}{4\theta}\int_\Omega \frac{H_{\frac 1m}(|x-y|)\varphi^2}{(\frac 1m+|D\boldsymbol u_\varepsilon|^2)^{\frac{\sigma+2}{2}}}(\varepsilon+|D{\boldsymbol u}_\varepsilon|^2)^{(p-1)}\|D^2\boldsymbol{u}_\varepsilon\|^2\, dx+ C(p,\rho, \sigma, \gamma).
\end{eqnarray} 
Choosing $\displaystyle \theta <\frac{\tau}{\sigma}$ we infer that
\begin{eqnarray}\nonumber
&&\int_{B_\rho(x_0)}\frac{H_{\frac 1m}(|x-y|)}{(\frac 1m +|D\boldsymbol u_\varepsilon|^2)^\frac{\sigma}{2}}\, dx\leq \int_{\Omega}\frac{H_{\frac 1m}(|x-y|)\varphi^2}{(\frac 1m +|D\boldsymbol u_\varepsilon|^2)^\frac{\sigma}{2}}\, dx\\\nonumber
&\leq& C(\sigma,\tau)\int_\Omega \frac{H_{\frac 1m}(|x-y|)\varphi^2}{(\frac 1m+|D\boldsymbol u_\varepsilon|^2)^{\frac{\sigma+2}{2}}}(\varepsilon+|D{\boldsymbol u}_\varepsilon|^2)^{(p-1)}\|D^2\boldsymbol{u}_\varepsilon\|^2\, dx+ C(p,\rho, \sigma, \gamma).
\end{eqnarray}
As before, let $m=\frac{1}{\varepsilon}.$ We obtain
\begin{eqnarray}\nonumber
&&\int_{B_\rho(x_0)}\frac{H_{\varepsilon}(|x-y|)}{(\varepsilon+|D\boldsymbol u_\varepsilon|^2)^\frac{\sigma}{2}}\, dx\\\nonumber&\leq& C(\sigma,\tau)\int_{B_{2\rho}} {\left(\varepsilon+|D\boldsymbol u_\varepsilon|^2\right)^{\frac{\alpha-\sigma+(p-2)}{2}}}\frac{(\varepsilon+|D{\boldsymbol u_\varepsilon}|^2)^\frac{p-2-\alpha}{2}\|D^2\boldsymbol{u}_\varepsilon\|^2\varphi^2}{|x-y|^\gamma}\, dx\\\nonumber&&+C(p,\rho, \sigma, \gamma).\end{eqnarray}
Since $\sigma$ is as in \eqref{zicurli}, there exists $\alpha\geq 0$ such that   $\alpha-\sigma+(p-2)\geq 0$ and    estimation~\eqref{eq:princ1i} of Theorem \ref{teosecond} holds. We obtain 
\begin{equation}\label{eq:estimates4}
\int_{B_\rho(x_0)}\frac{H_{\varepsilon}(|x-y|)}{(\varepsilon+|D\boldsymbol u_\varepsilon|^2)^\frac{\sigma}{2}}\, dx\leq C(f,n, p, \gamma, \rho, \sigma,\tau).
\end{equation}
Using  Fatou Lemma in \eqref{eq:estimates4}  and a classical  finite covering argument,  we  get the thesis. 
\end{proof}
%\begin{remark}
% Even if $p>2$ implies $p-2+\alpha>0$,  for any choice of $\alpha$, when $p\in (1,2)$,  the above $\sigma$ is positive if
%$$
% \frac{5\sqrt{N}+1-\sqrt{N+6\sqrt{N}+1}}{2\sqrt{N}}<p<2.
%$$ 
%\end{remark}
The estimates for  $D^2\boldsymbol u$ and $|D\boldsymbol u|^{-1}$ obtained in Theorem \ref{teosecond} and  Theorem \ref{Reg2} gives us tools for proving more regularity for our weak solution $\boldsymbol u$. We have
\begin{theorem}\label{Reg}
Let $\Omega$ be a bounded  smooth domain and $p>1$. Let $\boldsymbol{u}\in C^1({\Omega})$ be a weak solution of \eqref{system1} and assume $\boldsymbol  f(x)\in W_{loc}^{1,1}(\Omega)\cap C_{loc}^{0,\beta}(\Omega).$ Then
\begin{itemize}
\item[$(i)$] if  $p>\frac 32$, recovering the result already proved in \cite{Cma}
\begin{equation}\label{eq:w12}
|D{\boldsymbol u}|^{p-2}D{\boldsymbol u} \in W^{1,2}_{loc}(\Omega);
\end{equation}
\item[$(ii)$] if  $1<p<3$,
then 
\[\boldsymbol u\in W^{2,2}_{loc}(\Omega);\]
\item[$(iii)$] if  $p \geq 3$ and for some $i=1,\dots, N$
	\[
	f^i\geq \tau >0 \quad in \quad  \Omega,
	\] 
	then 
\[\boldsymbol u\in W^{2,q}_{loc}(\Omega),\mbox{ with } 1\leq q<\frac{p-1}{p-2}.\]\end{itemize}
\end{theorem}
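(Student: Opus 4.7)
The plan is to deduce each of the three assertions from the weighted estimate \eqref{der2} with $\gamma=0$ (allowed by Remark \ref{eq:avetecapitochecisnio}) and, for part (iii), also from the inverse-weight bound \eqref{zicurli} with $\gamma=0$. Passing from the regularized $\boldsymbol u_\varepsilon$ of \eqref{system} to $\boldsymbol u$ uses the uniform bounds of Theorems \ref{teosecond} and \ref{Reg2}, weak compactness, and the $C^{1,\beta}_{loc}$ convergence $\boldsymbol u_\varepsilon\to\boldsymbol u$ exploited in the proof of Theorem \ref{teosecond}.

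\emph{Part (i).} A pointwise differentiation on $\Omega\setminus Z_{\boldsymbol u}$ yields
$$|\nabla(|D\boldsymbol u|^{p-2}D\boldsymbol u)|^2\leq C_p\,|D\boldsymbol u|^{2(p-2)}\|D^2\boldsymbol u\|^2,$$
after using $|D^{1,2}\boldsymbol u|^2\leq|D\boldsymbol u|^2\|D^2\boldsymbol u\|^2$ from \eqref{eq:FraCo}. Setting $M:=\|D\boldsymbol u\|_{L^\infty(\tilde\Omega)}$, for $p\geq 2$ I take $\alpha=0$ in Theorem \ref{teosecond} and control $|D\boldsymbol u|^{2(p-2)}\leq M^{p-2}|D\boldsymbol u|^{p-2}$. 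For $3/2<p<2$ I choose any $\alpha\in[2-p,\,p-1)$ — a non-empty interval precisely because $p>3/2$ — and rewrite $|D\boldsymbol u|^{2(p-2)}=|D\boldsymbol u|^{p-2+\alpha}|D\boldsymbol u|^{p-2-\alpha}\leq M^{p-2+\alpha}|D\boldsymbol u|^{p-2-\alpha}$ thanks to $p-2+\alpha\geq 0$. The corresponding $W^{1,2}_{loc}$ bound on $(\varepsilon+|D\boldsymbol u_\varepsilon|^2)^{(p-2)/2}D\boldsymbol u_\varepsilon$ then transfers to $|D\boldsymbol u|^{p-2}D\boldsymbol u$ by weak compactness.

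\emph{Part (ii).} If $2\leq p<3$ I take $\alpha=p-2\in[0,1)$ so the weight in \eqref{der2} becomes $1$ and $\int_{\tilde\Omega}\|D^2\boldsymbol u\|^2<\infty$ at once. If $1<p<2$ I take $\alpha=0$; then $p-2<0$, so on $\tilde\Omega\setminus Z_{\boldsymbol u}$
$$\|D^2\boldsymbol u\|^2=|D\boldsymbol u|^{2-p}\bigl(|D\boldsymbol u|^{p-2}\|D^2\boldsymbol u\|^2\bigr)\leq M^{2-p}\bigl(|D\boldsymbol u|^{p-2}\|D^2\boldsymbol u\|^2\bigr),$$
which is integrable by Theorem \ref{teosecond}. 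Since the same computation on $\boldsymbol u_\varepsilon\in C^2$ gives a uniform $W^{2,2}_{loc}$ bound (the regularising factor $(\varepsilon+|D\boldsymbol u_\varepsilon|^2)^{(p-2)/2}$ is bounded below away from $0$ uniformly in $\varepsilon$ on compacts, since $|D\boldsymbol u_\varepsilon|$ is uniformly bounded), weak compactness identifies $\boldsymbol u\in W^{2,2}_{loc}(\Omega)$.

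\emph{Part (iii).} For $p\geq 3$ and fixed $q$ with $1\leq q<(p-1)/(p-2)$ (in particular $q<2$), I split
$$\|D^2\boldsymbol u\|^q=\bigl(|D\boldsymbol u|^{p-2-\alpha}\|D^2\boldsymbol u\|^2\bigr)^{q/2}\,|D\boldsymbol u|^{-(p-2-\alpha)q/2}$$
and apply Hölder's inequality with conjugate exponents $2/q$ and $2/(2-q)$:
$$\int_{\tilde\Omega}\|D^2\boldsymbol u\|^q\leq\left(\int_{\tilde\Omega}|D\boldsymbol u|^{p-2-\alpha}\|D^2\boldsymbol u\|^2\right)^{\!q/2}\!\left(\int_{\tilde\Omega}|D\boldsymbol u|^{-(p-2-\alpha)q/(2-q)}\right)^{\!(2-q)/2}.$$
Theorem \ref{teosecond} bounds the first factor. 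The sign hypothesis on $f^i$ makes Theorem \ref{Reg2} available, which requires $\sigma:=(p-2-\alpha)q/(2-q)<p-1$, equivalently $q(2p-3-\alpha)<2(p-1)$. Letting $\alpha\to1^-$ (allowed since $\alpha\in[0,1)$ for $p>2$) yields exactly $q<(p-1)/(p-2)$.

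The main obstacle is the rigorous transfer of the $\varepsilon$-uniform bounds of Theorems \ref{teosecond}-\ref{Reg2} to weak second-order Sobolev regularity of $\boldsymbol u$ itself: one has to check that the regularising weight stays away from $0$ on compacts in the relevant range of exponents so that a genuine uniform $W^{2,q}_{loc}$ bound on $\boldsymbol u_\varepsilon$ emerges, and then invoke weak compactness together with the $C^{1,\beta}_{loc}$ convergence $\boldsymbol u_\varepsilon\to\boldsymbol u$ to identify the weak limit.
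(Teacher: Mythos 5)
Your proposal is correct and follows essentially the same strategy as the paper's own proof: apply Theorem \ref{teosecond} (with $\gamma=0$, via Remark \ref{eq:avetecapitochecisnio}) at the level of $\boldsymbol u_\varepsilon$, absorb the extra power of $|D\boldsymbol u_\varepsilon|$ into the $L^\infty$ bound on the gradient for parts (i)--(ii), split via H\"older and invoke Theorem \ref{Reg2} for part (iii), and then pass to the limit $\varepsilon\to 0$ by weak compactness in $W^{2,q}_{loc}$ together with $C^{1,\beta}_{loc}$ convergence. The only cosmetic difference is that the paper packages the computation in parts (i)--(ii) through an auxiliary quantity $V_{\varepsilon,\eta,i,j}=(\varepsilon+|D\boldsymbol u_\varepsilon|^2)^{(\eta-1)/2}u^{j}_{\varepsilon,i}$ (taking $\eta=p-1$ and $\eta=1$, respectively) rather than writing the pointwise estimate $|\nabla(|D\boldsymbol u|^{p-2}D\boldsymbol u)|^2\leq C|D\boldsymbol u|^{2(p-2)}\|D^2\boldsymbol u\|^2$ explicitly; the underlying estimate and the choice of $\alpha$ (including the observation that the interval $[2-p,p-1)$ is nonempty precisely for $p>3/2$) are identical.
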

\begin{proof}[Proof of Theorem \ref{Reg}]	
In this proof we use the regularity results contained in Theorems \ref{teosecond}-\ref{Reg2} in the case $\gamma=0$, see Remark \ref{eq:avetecapitochecisnio}. \\
We start proving $(iii)$. Let $\boldsymbol u_\varepsilon$  be the solution to \eqref{system} with $\boldsymbol f(x):= \boldsymbol f(x,\boldsymbol u(x))$ and $\boldsymbol u$ solution to \eqref{system1}. 
\noindent We  split  $|D^2 \boldsymbol{u}_\varepsilon|^q$  as 
\[|D^2 \boldsymbol{u}_\varepsilon|^q=|D\boldsymbol{u}_\varepsilon|^{\frac{(p-2-\alpha)q}{2}}|D^2 \boldsymbol{u}_\varepsilon|^q\frac{1}{|D \boldsymbol{u}_\varepsilon|^{\frac{(p-2-\alpha)q}{2}}}.\]
Let us exploit estimation \eqref{eq:estimates4} of Theorem \ref{Reg2} together with the fact that, in this case, if $p\geq 3$, $q<2$. Then
\begin{eqnarray}\nonumber
&&\int_{B_\rho(x_0)}|D^2 \boldsymbol{u}_\varepsilon|^qdx\\\nonumber
&&= \int_{B_\rho(x_0)}(\varepsilon+|D \boldsymbol{u}_\varepsilon|^2)^{\frac{(p-2-\alpha)q}{4}}|D^2 \boldsymbol{u}_\varepsilon|^q\frac{1}{(\varepsilon+|D \boldsymbol{u}_\varepsilon|^2)^{\frac{(p-2-\alpha)q}{4}}}dx\\\nonumber
&&\leq \left(\int_{B_\rho(x_0)}(\varepsilon+|D \boldsymbol{u}_\varepsilon|^2)^{\frac{p-2-\alpha}{2}}|D^2 \boldsymbol u_\varepsilon|^2dx\right)^{\frac q2} \left(\int_{B_\rho(x_0)}\frac{1}{(\varepsilon+|D \boldsymbol{u}_\varepsilon|^2)^{\frac{(p-2-\alpha)q}{4-2q}}}dx\right)^{\frac{2-q}{2}}\\\nonumber
&&\leq C \left(\int_{B_\rho(x_0)}\frac{1}{(\varepsilon+|D \boldsymbol{u}_\varepsilon|^2)^{\frac{(p-2-\alpha)q}{4-2q}}}dx\right)^{\frac{2-q}{2}},
\end{eqnarray}
by \eqref{eq:princ1i}. Whenever  the right hand side is bounded (see Theorem \ref{Reg2} and \eqref{eq:estimates4}), namely for 
\[\frac{(p-2-\alpha)q}{4-2q}<\frac{p-1}{2},\]
choosing $\alpha\simeq 1$
since $q<\frac{p-1}{p-2}$,
we deduce that 
\[\int_{B_\rho(x_0)}|D^2 \boldsymbol{u}_\varepsilon|^qdx\leq C.\]
If $\tilde{\Omega}\subset\subset \Omega$,  by finite covering argument we get 
\[\int_{\tilde \Omega}|D^2 \boldsymbol{u}_\varepsilon|^qdx\leq C,\]
where $C$ does not depend on $\varepsilon$.
It is now easy to see that, consequently, $\boldsymbol u\in W^{2,q}_{loc}(\Omega)$ in this case. Indeed $\boldsymbol{u}_\varepsilon \rightharpoonup \boldsymbol{w} \in W^{2,q}(\tilde \Omega)$. Moreover because of the compact embedding in $L^q(\tilde \Omega)$, up to a subsequence, $\boldsymbol{u}_\varepsilon \rightarrow \boldsymbol{w}$ a.e. in $\tilde \Omega$. On the other hand (see \cite{DiKaSc})
\[
\boldsymbol {u}_\varepsilon \rightarrow \boldsymbol{u}\quad  \text{a.e.}
\]
Hence \[\boldsymbol{u}\equiv \boldsymbol{w}\in W^{2,q}(\tilde \Omega).\]
We prove now the cases $(i)$-$(ii)$.   
We set 
\begin{equation}\label{eq:ciaopiero}
	 V_{\varepsilon,\eta,i,j}(x):=(\varepsilon+|D{\boldsymbol u_\varepsilon}|^2)^\frac{\eta-1}{2}u_{\varepsilon,i}^{j}.
\end{equation}
Since
$$
\nabla V_{\varepsilon,\eta,i,j}=(\eta-1)(\varepsilon+|D{\boldsymbol u_\varepsilon}|^2)^\frac{\eta-3}{2}|D{\boldsymbol u_\varepsilon}|\nabla(|D \boldsymbol u_\varepsilon|)(u_{\varepsilon,i}^{j})+(\varepsilon+|D{\boldsymbol u_\varepsilon}|^2)^\frac{\eta-1}{2}\nabla u_{\varepsilon,i}^{j}.
$$
 We have 
$$
 \int_{B_\rho(x_0)} |D V_{\varepsilon,\eta}|^2\, dx\leq C\int_{B_\rho(x_0)}(\varepsilon+|D{\boldsymbol u_\varepsilon}|^2)^{\eta-1}|D^2\boldsymbol u_\varepsilon|^2\, dx,
$$
for a suitable positive constant $C=C(\eta)$.  \\
Using  \eqref{eq:princ1i} of Theorem \ref{teosecond}, if $p> \frac 32 $ and $\eta =p-1$ we get 
$$
 \int_{B_\rho(x_0)} |D V_{\varepsilon,\eta}|^2\, dx\leq C\int_{B_\rho(x_0)}(\varepsilon+|D{\boldsymbol u_\varepsilon}|^2)^{p-2}|D^2\boldsymbol u_\varepsilon|^2\, dx\leq C.
$$
Using this estimate, arguing exactly as in the previous case, we deduce $(i)$ 
\[
|D{\boldsymbol u}|^{p-2}D{\boldsymbol u} \in W^{1,2}_{loc}(\Omega).
\]
Finally $(ii)$ follows setting $\eta=1$ in \eqref{eq:ciaopiero}.
\end{proof}
\section{Comparison Principles and Symmetry}\label{comparison}
In this section we will first prove some comparison principles that are new for vectorial $p$-Laplace equation.
First of all,  let us state a useful  inequality; for given $\eta,\eta' \in \mathbb{R}^{q \times r}$,  there exists a positive constant $C_p=C_p(p)$ such that
\begin{equation}\label{eq:lucio}
(|\eta|^{p-2}\eta-|\eta'|^{p-2}\eta'): (\eta - \eta') \geq C_p (|\eta|+|\eta'|)^{p-2}|\eta-\eta'|^2.
\end{equation}
As a consequence of Theorem \ref{Reg2},  we deduce a weighted Poincar\'e type inequality and then we use it to get a weak comparison principle in small domains.   The integrability property \eqref{zicurli} is the key  to get weighted Poincar\'e inequality. Indeed once  \eqref{zicurli} is in force, we deduce the  Poincar\'e inequality  from \cite[Theorem 8]{FMS}. 
Let us remark that the  Poincar\'e inequality is relevant to the case $p>2$ in the proof of the symmetry result.
We have the following 
\begin{theorem}[\cite{DamSci, FMS}]\label{PoincareEsp}
 Let $p>2$, $\boldsymbol{u}\in C^{1}({\Omega})$ be a weak solution of \eqref{system1}   with  $\Omega\subset \mathbb{R}^n$  a smooth bounded domain and assume that $\boldsymbol{f}(x,\cdot)$ satisfies \eqref{hpf} $(i)$-$(ii)$.  \\
  For any $\Omega '\subset \subset \Omega,$ let $u$ be such that   
    \begin{equation*}
        \int_{\Omega'} \frac{1}{|D{\boldsymbol u}|^{(p-2)t}|x-y|^{\gamma}}\,dx\leq C,
    \end{equation*}
with $1<t<\frac{p-1}{p-2}$, $\gamma <n-2$ if $n\geq 3$  ($\gamma =0$ if $n=2$).  Then for any $w\in {C^\infty_c(\Omega')}$ 
%with respect to the norm
%\[\|w\|^2:=\int_{\Omega'} w^2\, dx+ \int_{\Omega'} |D{\boldsymbol u}|^{p-2}|\nabla w|^2\, dx,\]
there exist a constant $C_S(|\Omega '|)$, with $C_S(|\Omega '|)\rightarrow 0$ if $|\Omega '|\rightarrow 0$, such that
    \begin{equation}\label{Poincarepesata}
    \|w\|_{L^q(\Omega ')}\leq C_S(|\Omega '|)\left(\int_\Omega |D{\boldsymbol u}|^{p-2}|\nabla w|^2\right)^{\frac{1}{2}},
    \end{equation}
   for any $1\leq q< 2^*(t)$, where 
    \begin{equation*}
        \frac{1}{2^*(t)}=\frac{1}{2}-\frac 1n+\frac{1}{t}\left(\frac 12 -\frac \gamma {2n}\right).
    \end{equation*}
    Furthermore \eqref{Poincarepesata} holds for any $w\in H^1_{0,\rho}(\Omega')$, \cite{CiDeSci}.
\end{theorem}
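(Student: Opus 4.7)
The statement is recorded from \cite{DamSci, FMS}, so my plan is to verify that the hypotheses of the general weighted Sobolev--Poincar\'e inequality of \cite[Theorem 8]{FMS} are met in the present setting and to indicate where the exponent $2^{*}(t)$ comes from. The trigger is the weight integrability \eqref{zicurli} supplied by Theorem \ref{Reg2}, which guarantees that $|D\boldsymbol u|^{-(p-2)t}|x-y|^{-\gamma}\in L^{1}(\Omega')$ for every $t<(p-1)/(p-2)$, which is exactly the integrability class required in \cite{FMS}.

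For $w\in C_{c}^{\infty}(\Omega')$ and a parameter $r\in(1,2)$ to be tuned, I would combine the classical unweighted Sobolev embedding $W_{0}^{1,r}(\Omega')\hookrightarrow L^{r^{*}}(\Omega')$, with $1/r^{*}=1/r-1/n$, with a H\"older split. Writing $|\nabla w|^{r}=(|D\boldsymbol u|^{p-2}|\nabla w|^{2})^{r/2}\cdot|D\boldsymbol u|^{-(p-2)r/2}$ and applying H\"older with exponents $2/r$ and $2/(2-r)$ yields
\[
\int_{\Omega'}|\nabla w|^{r}\,dx\leq \Bigl(\int_{\Omega'}|D\boldsymbol u|^{p-2}|\nabla w|^{2}\,dx\Bigr)^{r/2}\Bigl(\int_{\Omega'}|D\boldsymbol u|^{-(p-2)r/(2-r)}\,dx\Bigr)^{(2-r)/2}.
\]
An additional H\"older step allows one to absorb the $|x-y|^{-\gamma}$ factor into the second integral, reducing it to a power of $\int_{\Omega'}|D\boldsymbol u|^{-(p-2)t}|x-y|^{-\gamma}\,dx$, which is finite by Theorem \ref{Reg2}.

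The arithmetic core of the argument will then be choosing $r$ so that the three constraints — the Sobolev gap $1/r-1/n$, the H\"older pairing, and the weight integrability threshold $t$ — line up to give precisely $r^{*}=2^{*}(t)$; the formula $1/2^{*}(t)=1/2-1/n+(n-\gamma)/(2nt)$ is exactly this bookkeeping. For $1\le q<2^{*}(t)$, a final H\"older inequality in $L^{2^{*}(t)/q}(\Omega')$ produces an extra factor $|\Omega'|^{1/q-1/2^{*}(t)}$ in the constant, which is the source of the smallness $C_{S}(|\Omega'|)\to 0$ as $|\Omega'|\to 0$. The passage from $C_{c}^{\infty}(\Omega')$ to $H^{1}_{0,\rho}(\Omega')$ is the routine density argument carried out in \cite{CiDeSci}. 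The only real obstacle is this exponent bookkeeping; no genuinely new analytic tool beyond the integrability already secured in Theorem \ref{Reg2} is needed.
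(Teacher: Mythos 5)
The proposal does not reach the stated exponent $2^{*}(t)$ when $\gamma>0$, and the gap is precisely where the $|x-y|^{-\gamma}$ weight is supposed to enter. In your H\"older split, finiteness of the second factor forces $(p-2)r/(2-r)\le(p-2)t$, i.e.\ $r\le 2t/(1+t)$, which pins the unweighted Sobolev exponent at
\[
\frac{1}{r^{*}}=\frac{1}{r}-\frac{1}{n}=\frac{1}{2}+\frac{1}{2t}-\frac{1}{n}.
\]
This equals $1/2^{*}(t)$ only when $\gamma=0$; for $\gamma>0$ one has $r^{*}<2^{*}(t)$ strictly. The ``additional H\"older step to absorb $|x-y|^{-\gamma}$'' cannot close this gap: once the classical embedding $W^{1,r}_{0}\hookrightarrow L^{r^{*}}$ has been used, the kernel $|x-y|^{1-n}$ has already been integrated away, and inserting a further factor $|x-y|^{-\gamma}$ into the second integral only makes the integrand larger. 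That $\int_{\Omega'}|D\boldsymbol{u}|^{-(p-2)t}|x-y|^{-\gamma}\,dx$ is finite uniformly in $y$ is a strictly stronger hypothesis than plain integrability of $|D\boldsymbol{u}|^{-(p-2)t}$, but your decomposition offers no way to trade that strength for a larger $q$. The paper does not reprove the theorem --- it cites \cite{DamSci, FMS}, in particular \cite[Thm.~8]{FMS} --- and the argument there really does use an extra tool: one starts from the potential representation $|w(x)|\le C\int_{\Omega'}|\nabla w(y)|\,|x-y|^{1-n}\,dy$, H\"older-splits the \emph{kernel itself} so that part of the singularity is paired with the inverse-weight factor, and finishes with weighted fractional-integration estimates; it is there that the uniformity in $y$ of \eqref{zicurli} becomes decisive.

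The discrepancy matters for the application. For $n\ge 3$ your $r^{*}$ exceeds $2$ only when $p<2+\frac{2}{n-2}$, whereas (as the Remark following the theorem records) $2^{*}(t)>2$ for every $p>2$ once $\gamma$ is taken close to $n-2$. Since Theorem~\ref{thm:weak1} applies \eqref{Poincarepesata} with $q=2$, the route you propose would close the comparison-principle argument only in a restricted range of $p$. To repair the plan you would need to replace the classical Sobolev embedding with the Riesz-potential representation and perform the H\"older decomposition at the kernel level, as in \cite[Thm.~8]{FMS}.
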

\begin{remark}
As already noted in Remark 4 in \cite{FMS},  the largest value of $2^*(t)$ is obtained for $\gamma\simeq n-2$ ($\gamma=0$,  whenever $n=2$) and $t\simeq\frac{p-1}{p-2}$. In this case
\begin{equation*}
   \frac{1}{2^*(p)}\simeq\frac{1}{2}-\frac 1n+\frac{p-2}{p-1} \frac 1n
 \end{equation*}
and therefore $2^*(p)>2$.\\
Moreover we point out that if $\boldsymbol{u}\in C^1(\bar\Omega)$ and $Z_{\boldsymbol{u}}\subset\subset \Omega$, it follows that the estimate \eqref{Poincarepesata} holds for any $\Omega'\subseteq \Omega.$
\end{remark}
\noindent Theorem \ref{PoincareEsp} allows us to prove  \begin{proof}[Proof of Theorem \ref{thm:weak1}]
%The case $1<p\leq2$ can be obtained \textcolor{red}{following the proof of Theorem 2.3 in~\cite{MRS}}. Here we suppose $p> 2$.
Let us consider the vectorial function 
\begin{equation}\label{eq:(20)12}
(\boldsymbol{u-v})^+:= \left( (u^1-v^1)^+,\ldots, (u^\ell-v^\ell)^+,\ldots, (u^N-v^N)^+ \right).
\end{equation}
Since $(\boldsymbol{u-v})^+$ is a good test function for equations \eqref{eq:(20)0},  using \eqref{ipotesibrut} together with \eqref{eq:lucio}, we obtain  
\begin{eqnarray}\label{eq:(20)2}
&&C_p \int_{\tilde\Omega}(|D {\boldsymbol u}|+|D \boldsymbol{v}|)^{p-2}|D (\boldsymbol{u-v})^+|^2 dx \nonumber \\
&& \leq \int_{\tilde\Omega} (|D \boldsymbol{u}|^{p-2}D \boldsymbol{u} - |D \boldsymbol{v}|^{p-2}D \boldsymbol{v}: D (\boldsymbol{u-v})^+)\, dx  \nonumber \\
&& \leq \int_{\tilde\Omega}\langle(\boldsymbol{f}(x,\boldsymbol{u})-\boldsymbol{f}(x,\boldsymbol{v})), (\boldsymbol{u-v})^+\rangle\, dx.
\end{eqnarray}
Now we can evaluate the right term of \eqref{eq:(20)2} as
\begin{eqnarray*}
&& \int_{\tilde\Omega}\langle(\boldsymbol{f}(x,\boldsymbol{u})-\boldsymbol{f}(x,\boldsymbol{v})),(\boldsymbol{u-v})^+\rangle \, dx\\\nonumber &&=\int_{\tilde\Omega} \sum_{\ell=1}^N \left[f^\ell(x,u^1,\ldots,u^N)-f^\ell(x,v^1,\ldots,v^N)\right] (u^\ell-v^\ell)^+\, dx   \nonumber \\
&& = \int_{\tilde\Omega} \sum_{\ell=1}^N \left[f^\ell(x,u^1,u^2,\ldots, u^N)-f^\ell(x,v^1,u^2\ldots, u^N)+f^\ell(x,v^1,u^2\ldots, u^N) \right. \nonumber \\
&& \qquad \qquad \qquad  \left.  -f^\ell(x,v^1,\ldots,v^N)\right] (u^\ell-v^\ell)^+ \,dx.
\end{eqnarray*}
Iterating this procedure for the remaining variables we get
\begin{eqnarray}\label{eq:(20)21}
&& \int_{\tilde\Omega}\langle (\boldsymbol{f}(x,\boldsymbol{u})-\boldsymbol{f}(x,\boldsymbol{v})), (\boldsymbol{u-v})^+\rangle \, dx\\\nonumber
&=& \int_{\tilde\Omega}\sum_{\ell=1}^N  \left[f^\ell(x,u^1,u^2,\ldots, u^N)-f^\ell(x,v^1,u^2\ldots, u^N)+f^\ell(x,v^1,u^2,\ldots, u^N) \right. \nonumber \\
&& -f^\ell(x,v^1,v^2,\ldots, u^N)+\cdots+f^\ell(x,v^1,v^2,\ldots, u^\ell,\ldots, u^N)\nonumber \\ &&\qquad \qquad \qquad-f^\ell(x,v^1,v^2,\ldots, v^\ell,\ldots, u^N) \nonumber \\
&& \qquad \vdots  \nonumber  \\
&& +\ldots + \left. f^\ell(x,v^1,v^2,\ldots, u^N)-f^\ell(x,v^1,v^2,\ldots, v^N) \right] (u^\ell-v^\ell)^+\, dx \nonumber.
\end{eqnarray}
Now, having assumend among the others the cooperative condition $({h_f})$-$(iii)$, \eqref{eq:(20)21} becomes
\begin{eqnarray}\label{eq:(20)22}
&& \int_{\tilde\Omega}\langle(\boldsymbol{f}(x,\boldsymbol{u})-\boldsymbol{f}(x,\boldsymbol{v})), (\boldsymbol{u-v})^+\,\rangle dx \nonumber \\
&\leq& \int_{\tilde\Omega} \sum_{\ell=1}^N \left[   \frac{f^\ell(x,u^1,u^2,\ldots, u^N)-f^\ell(x,v^1,u^2\ldots, u^N)}{(u^1-v^1)^+} (u^1-v^1)^+ (u^\ell-v^\ell)^+ \right. \nonumber \\
&& + \frac{ f^\ell(x,v^1,u^2,\ldots, u^N)  -f^\ell(x,v^1,v^2,\ldots, u^N)}{(u^2-v^2)^+} (u^2-v^2)^+ (u^\ell-v^\ell)^+ \nonumber \\
&& \vdots    \nonumber\\
&&+  \frac{f^\ell(x,v^1,v^2,\ldots, u^\ell,\ldots, u^N)-f^\ell(x,v^1,v^2,\ldots, v^\ell,\ldots, u^N)}{ (u^\ell-v^\ell)^+}{[(u^\ell-v^\ell)^+]^2} \nonumber  \\
 && \vdots \nonumber  \\
&& + \left. \frac{f^\ell(x,v^1,v^2,\ldots, u^N)-f^\ell(x,v^1,v^2,\ldots, v^N)}{(u^N-v^N)^+} (u^N-v^N)^+ (u^\ell-v^\ell)^+ \right]\, dx \nonumber \\
&\leq& L_f \int_{\tilde\Omega} \sum_{\ell=1}^N  \sum_{j=1}^N   (u^j-v^j)^+ (u^\ell-v^\ell)^+\, dx \leq N L_f \int_{\tilde\Omega} \sum_{\ell=1}^N[(u^\ell-v^\ell)^+]^2 \,dx, 
\end{eqnarray}
where in the last inequality we used the Young's inequality, and $L_f$ is a positive constant depending on the constants $L_{\ell}$ in $(h_f)$-$(i)$.\\
If $1<p\leq 2$ we use classical Poincar\'e inequality in the right hand side of \eqref{eq:(20)22}. In this case we obtain 
\begin{eqnarray}\label{eq:quandotornicipensoio}\nonumber
\int_{\tilde\Omega} \sum_{\ell=1}^N[(u^\ell-v^\ell)^+]^2 \,dx&\leq& C_P(|\tilde \Omega|) \int_{\tilde\Omega} \sum_{\ell=1}^N |\nabla (u^\ell-v^\ell)^+|^2\,dx=C_P\int_{\tilde\Omega} |D (\boldsymbol{u-v})^+|^2 \, dx\\
&\leq& C \int_{\tilde\Omega}(|D {\boldsymbol u}|+|D \boldsymbol{v}|)^{p-2}|D (\boldsymbol{u-v})^+|^2 \, dx,
\end{eqnarray}
where $C$ is a positive constant depending on $\|D {\boldsymbol u}\|_{\infty}^{2-p}$ and $|\tilde \Omega|$, since $1<p\leq 2$. As well known $C\rightarrow 0$, as $|\tilde \Omega|\rightarrow 0$.
\\
In the case $p>2$ since either $\boldsymbol u$ or $\boldsymbol v$ is a solution to \eqref{system1},   \eqref{zicurli} and \eqref{Poincarepesata} hold,  the right hand side of  \eqref{eq:(20)22} becomes 
\begin{eqnarray}\label{eq:atecipensoio}
\nonumber
\int_{\tilde\Omega} \sum_{\ell=1}^N[(u^\ell-v^\ell)^+]^2 \,dx&\leq& C_S(|\tilde \Omega|) \int_{\tilde\Omega} \sum_{\ell=1}^N(|D {\boldsymbol u}|+|D \boldsymbol{v}|)^{p-2} |\nabla (u^\ell-v^\ell)^+|^2\,dx\\
&\leq& C_S(|\tilde \Omega|) \int_{\tilde\Omega}(|D {\boldsymbol u}|+|D \boldsymbol{v}|)^{p-2} |D (\boldsymbol{u-v})^+|^2 \, dx
\end{eqnarray}
Collecting \eqref{eq:(20)2}-\eqref{eq:atecipensoio} we deduce 
\begin{equation}\label{eq:nananananna}
\int_{\tilde\Omega}(|D {\boldsymbol u}|+|D \boldsymbol{v}|)^{p-2}|D (\boldsymbol{u-v})^+|^2 dx\leq C\int_{\tilde\Omega}(|D {\boldsymbol u}|+|D \boldsymbol{v}|)^{p-2} |D (\boldsymbol{u-v})^+|^2 \, dx,
\end{equation}
where $C=C(\boldsymbol f,p,N,|\tilde \Omega|, \|D {\boldsymbol u}\|_{\infty})$ goes to zero as $|\tilde \Omega|$ goes to zero. \\
There exists $\lambda:=\lambda(\boldsymbol f,p,N, \|D {\boldsymbol u}\|_{\infty})$ sufficiently small such that, if $|\tilde \Omega|<\lambda$, then $C<\frac 12$ in \eqref{eq:nananananna}. 
This is a contradiction  unless  $\boldsymbol u\leq \boldsymbol v$ in $\tilde \Omega$.
\end{proof}
The following  result that will be useful in the proofs of Theorem \ref{teoremaprincipale}. 
\begin{proposition}\label{nahnah}
Let $\Omega\subset\R^n$ be a bounded smooth domain and let $\boldsymbol{u},\boldsymbol{v}\in C^1(\Omega)$ such that
\begin{equation}\label{eq:telecom}
|D\boldsymbol{u}|^{p-2}D\boldsymbol{u}, |D\boldsymbol{v}|^{p-2}D\boldsymbol{v}\in W^{1,2}_{loc}(\Omega)
\end{equation}
and satisfying 
\begin{equation}\label{2}
        -{\boldsymbol \Delta}_p{\boldsymbol u}-\boldsymbol{f}(x,\boldsymbol{u})=0\quad -{\boldsymbol \Delta}_p{\boldsymbol v}-\boldsymbol{g}(x,\boldsymbol{v})=0\quad \text{in }\Omega.
    \end{equation}
    \\
We set $A:=\{x\in \Omega : \boldsymbol{u}(x)=\boldsymbol{v}(x)\}$ and suppose that \begin{equation}\label{1}
    \boldsymbol f(x,\boldsymbol u)<\boldsymbol g(x,\boldsymbol v) \quad \text{a.e. in }A.
\end{equation}  
Then $|A|=0$.
\end{proposition}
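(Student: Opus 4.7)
The plan is to argue by contradiction via two successive applications of Stampacchia's theorem on the vanishing of weak gradients on level sets. Let $\boldsymbol{w}=\boldsymbol{u}-\boldsymbol{v}$, which is $C^1(\Omega)$ in particular in $W^{1,p}_{loc}(\Omega)$. Since on $A$ we have $\boldsymbol{u}=\boldsymbol{v}$, each component $w^{\ell}=u^{\ell}-v^{\ell}$ vanishes on $A$. Stampacchia's lemma (applied component by component) yields $\nabla w^{\ell}=0$ a.e.\ on $A$ for every $\ell=1,\dots,N$; equivalently $D\boldsymbol{u}=D\boldsymbol{v}$ a.e.\ on $A$. Consequently the stress fields agree a.e.\ on $A$:
\begin{equation*}
|D\boldsymbol{u}|^{p-2}D\boldsymbol{u}=|D\boldsymbol{v}|^{p-2}D\boldsymbol{v} \quad \text{a.e.\ on } A.
\end{equation*}

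Now I invoke the regularity assumption \eqref{eq:telecom}: the function
\begin{equation*}
\boldsymbol{\Phi}:=|D\boldsymbol{u}|^{p-2}D\boldsymbol{u}-|D\boldsymbol{v}|^{p-2}D\boldsymbol{v}
\end{equation*}
belongs to $W^{1,2}_{loc}(\Omega;\mathbb{R}^{N\times n})$, and by the previous step each of its entries $\Phi^{i}_{j}$ vanishes a.e.\ on $A$. A second application of Stampacchia's lemma then gives $\nabla \Phi^{i}_{j}=0$ a.e.\ on $A$ for every $i,j$. Summing over $j$, this means
\begin{equation*}
\operatorname{\bf div}(|D\boldsymbol{u}|^{p-2}D\boldsymbol{u})-\operatorname{\bf div}(|D\boldsymbol{v}|^{p-2}D\boldsymbol{v})=0 \quad \text{a.e.\ on } A.
\end{equation*}

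Combining this with the equations \eqref{2}, we conclude that $\boldsymbol{f}(x,\boldsymbol{u})=\boldsymbol{g}(x,\boldsymbol{v})$ a.e.\ on $A$, which contradicts the strict inequality \eqref{1} on every subset of $A$ of positive measure. Hence $|A|=0$, as desired.

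The only delicate point is the legitimacy of the second application of Stampacchia's lemma, which is precisely why the hypothesis \eqref{eq:telecom} was imposed: without the $W^{1,2}_{loc}$ regularity of the stress fields one could not deduce the pointwise a.e.\ vanishing of the divergence on $A$. The first application is automatic because $\boldsymbol{u},\boldsymbol{v}\in C^1(\Omega)$, while the second is the substantive step that ties the argument to the regularity theory developed in the previous section (see Theorem~\ref{Reg}).
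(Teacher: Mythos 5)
Your proof is correct and follows the same two-step Stampacchia argument as the paper: first, use $\boldsymbol{u}-\boldsymbol{v}=0$ on $A$ to conclude the stress fields agree a.e.\ on $A$; then, using \eqref{eq:telecom}, apply Stampacchia again to the $W^{1,2}_{loc}$ difference of stress fields to conclude the divergences agree a.e.\ on $A$, contradicting \eqref{1}. Your write-up simply spells out the componentwise applications of Stampacchia's lemma that the paper leaves implicit.
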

\begin{proof}
Using Stampacchia's theorem (see \cite[Theorem $6.19$]{S}) we have that $$|D\boldsymbol{u}|^{p-2}D\boldsymbol{u}=|D\boldsymbol{v}|^{p-2}D\boldsymbol{v}\quad \text{a.e. on } A.$$ By \eqref{eq:telecom} and Stampacchia's theorem we get $$\operatorname{\bf div}(|D\boldsymbol{u}|^{p-2}D\boldsymbol{u})=\operatorname{\bf div}(|D\boldsymbol{v}|^{p-2}D\boldsymbol{v})\quad \text{a.e. on } A.$$ By \eqref{2} and \eqref{1} we get the thesis.
\end{proof}
The last part of this section is devoted to prove the symmetry result Theorem \ref{teoremaprincipale}. For a real number $\lambda$, we set
\begin{equation*}%\label{eq:whawha}
	\Omega_\lambda=\Omega \cap \{x_1< \lambda\},\end{equation*}
\[x_\lambda =R_{\lambda}(x):=(2\lambda-x_1,x_2,\ldots,x_n),\]
which is the point reflected through the hyperplane
$T_\lambda =\{x\in \mathbb R^n\, :\, x_1=\lambda \}.$  
Also {define}
\[\boldsymbol{u}_{\lambda}(x):=\boldsymbol{u}(x_\lambda)=\boldsymbol{u}(2\lambda -x_1,x_2,\ldots, x_{n-1}, x_n).\]
\
We recall the $\boldsymbol{u}_{\lambda}$ solves the following equation 
\begin{equation*}%\label{riflessorisolve}
   \int_{\Omega_{\lambda}} |D{\boldsymbol u_{\lambda}}|^{p-2}(D {\boldsymbol u_{\lambda}} : D {\boldsymbol \varphi})\, dx = \int_{\Omega_{\lambda}}\langle {\boldsymbol f}(x{_\lambda},\boldsymbol u_{\lambda}) , \boldsymbol{\varphi}\, \rangle dx, 
\end{equation*}
for any $ {\boldsymbol \varphi} \in W^{1,p}_{0}(\Omega). $
Finally we define 

$$\Lambda_0:=\{a<\lambda<0: \boldsymbol u\leq \boldsymbol u_t \text{ in } \Omega_t \text{ for all } t\in (a,\lambda]\}.$$
\begin{proof}[Proof of Theorem \ref{teoremaprincipale}]
We start showing that $$\Lambda_0\neq \emptyset.$$ To prove this, let us consider $\lambda>a$ with $\lambda-a$ small enough, so that $|\Omega_\lambda|$ is small.  As remarked above and by \eqref{eq:fcresc}, we deduce that $\boldsymbol{u}_{\lambda}$ is {such that}
 \begin{equation}\label{ulambda}
-\boldsymbol{\Delta}_p \boldsymbol{u}_{\lambda} = \boldsymbol f(x_{\lambda},\boldsymbol{u}_{\lambda})>\boldsymbol f(x,\boldsymbol{u}_{\lambda}) \quad \Omega_{\lambda}.
\end{equation}
Since $\boldsymbol{u}\leq \boldsymbol{u}_\lambda$ on $\partial \Omega_\lambda$ and \eqref{ipotesisuisupporti} holds, by Theorem \ref{thm:weak1} it follows that $$\boldsymbol{u}\leq\boldsymbol{u}_\lambda \quad \text{in } \Omega_\lambda.$$
Since $\Lambda_0\neq \emptyset$, let us now set 
$$\overline{\lambda}=\sup \Lambda_0.$$ We shall show that 
$\overline \lambda=0.$ \\
To do this  assume that $\overline \lambda < 0$ and we reach a contradiction by proving that $\boldsymbol{u}\leq \boldsymbol{u}_{\overline \lambda+\tau}$ in $\Omega_{\overline \lambda+\tau}$ for any $0<\tau<\overline\tau$,  for some small $\overline \tau>0$. \\\\
  We set $A:=\{x\in \Omega_{\overline \lambda}\text{ : }\boldsymbol{u}(x)=\boldsymbol{u}_{\overline{\lambda}}(x) \}$ and let $\mathcal B\subset \Omega_{\overline \lambda}$ be an  open set such that $(A\cup Z_{\boldsymbol u}\cup Z_{{\boldsymbol u}_{\overline \lambda}})\cap \Omega_{\overline{\lambda}} \subset \mathcal{B}$. Theorem  \ref{Reg2} and Proposition \ref{nahnah} (using \eqref{eq:fcresc} with $\boldsymbol g(x, \boldsymbol v)=\boldsymbol f(x_{\overline \lambda}, \boldsymbol u_{\overline \lambda})$) guarantee that $|A|=|Z_{\boldsymbol u}| = 0$; therefore $\mathcal{B}$ will be of an arbitrarily small measure. \\
By continuity we have $$\boldsymbol{u}\leq \boldsymbol{u}_{\overline \lambda} \quad \text{in }\Omega_{\overline \lambda}.$$

\noindent We choose  a compact set $K\subset \Omega_{\overline \lambda}\setminus \mathcal{B}$  such that Theorem \ref{thm:weak1} applyes, e.g.  $|\Omega_{\overline \lambda}\setminus K|< \frac{\delta}{2}$, where $\delta=\delta(\boldsymbol f,p,N, \|D {\boldsymbol u}\|_{\infty})$.\\
We set $$S_1:=\left\{x\in K: u^1(x)<u^1_{\overline{\lambda}}(x)\right\}, \quad \tilde S_1:=\left\{x\in K: u^1(x)=u^1_{\overline{\lambda}}(x)\right\}.$$ Since $K\cap A=\emptyset$, without loss of generality we consider $S_1\neq\emptyset.$ We can select a compact set $K_1\subset S_1$ such that $S_1\setminus K_1$ has small measure.
By \eqref{ipotesisuisupporti}  and by the uniform continuity of $\boldsymbol{u}$, we deduce that 
\begin{equation}\label{ok2}
\boldsymbol{u}\leq\boldsymbol{u}_{\bar \lambda +\tau}\,\, \text{in}\,\,  K_1,
\end{equation}
for any $0<\tau<\overline \tau_1$ for some small $\overline \tau_1>0.$

Since $\tilde S_1\cap A=\emptyset$, for any $x\in \tilde S_1$ there exist at least an index $i=2,...,N$ such that $u^i<u^i_{\overline\lambda}$ in $\overline{B_r(x)}\subset\subset \Omega_{\overline{\lambda}}$, for some $r>0$.

Since $\tilde S_1$ is closed (and obviously bounded), therefore there exist compact sets $\tilde K_2\subset \left\{u^2<u^2_{\overline{\lambda}}\right\},...,\tilde K_N\subset \left\{u^N<u^N_{\overline{\lambda}}\right\}$ in $\Omega_{\overline{\lambda}}$ (some of them eventually empty) such that 
\begin{equation}\nonumber
    \tilde S_1= (\tilde S_1\cap \tilde K_2)\cup \cdot\cdot\cdot \cup(\tilde S_1\cap \tilde K_N).
\end{equation}
By \eqref{ipotesisuisupporti} and by the uniform continuity of $\boldsymbol{u}$, we have that 
\begin{equation}\label{ok}
\boldsymbol{u}\leq \boldsymbol{u}_{\bar \lambda +\tau}\,\, \text{in}\,\, \tilde S_1,
\end{equation}
for any $0<\tau<\overline \tau_2$ for some small $\tau_2>0$. 
Let us choose $0<\overline \tau< \min\{\tau_1,\tau_2\} $ sufficiently small such that $|\Omega_{\overline \lambda+ \tau} \setminus (\tilde S_1\cup K_1)| < \delta$, for any $0<\tau<\overline \tau$. \\
Since $\boldsymbol{u} \leq \boldsymbol{u}_{\overline \lambda+ \tau}$ on $ \partial (\Omega_{\overline \lambda+ \tau} \setminus  (\tilde S_1\cup K_1))$ and  \eqref{ipotesisuisupporti}, Theorem \ref{thm:weak1} permits to have
$$\boldsymbol{u} \leq \boldsymbol{u}_{\overline \lambda+ \tau}\quad \text{in } \Omega_{\overline\lambda+ \tau} \setminus (\tilde S_1\cup K_1)),$$
for any $0<\tau<\overline \tau.$
Exploiting also \eqref{ok} and \eqref{ok2}, we have that 
$$\boldsymbol{u} \leq \boldsymbol{u}_{\overline \lambda+ \tau}\quad \text{in } \Omega_{\overline  \lambda+ \tau},$$
for any $0<\tau<\overline \tau$. \\
This is a contradiction with the definition of $\overline \lambda$, hence we have $\overline \lambda=0$ and $\boldsymbol{u}\leq \boldsymbol{u}_0$ in $\Omega_0$. \\ \\
Performing the moving plane technique in the opposite direction, we deduce that $\boldsymbol{u}$ is symmetric with respect to the hyperplane $\{x_1=0\}.$ We also remark that the solution is increasing in the $x_1$-direction in $\{x_1 < 0\}$ since it is implicit  in the moving plane procedure.\\

\end{proof}

\begin{center}
{\bf Acknowledgements}
\end{center} 
L. Montoro, L. Muglia, B. Sciunzi and D. Vuono are partially supported by PRIN project 2017JPCAPN (Italy): Qualitative and quantitative aspects of nonlinear PDEs, and L. Montoro by  Agencia Estatal de Investigación (Spain), project PDI2019-110712GB-100.

\

\noindent All the authors are partially supported also by Gruppo Nazionale per l’Analisi Matematica, la Probabilit\`a  e le loro Applicazioni (GNAMPA) of the Istituto Nazionale di Alta Matematica (INdAM).

\begin{center}
	{\sc Data availability statement}\
	All data generated or analyzed during this study are included in this published article.
\end{center}

\

\begin{center}
	{\sc Conflict of interest statement}
	\
	The authors declare that they have no competing interest.
\end{center}

%\
%
%\noindent $\diamond$ The authors would really like to thank the anonymous referee for his/her useful hints and comments.

\end{document}